\theoremstyle{plain}
\newtheorem{corollary}{Corollary}
\newtheorem{lemma}{Lemma}
\theoremstyle{remark}
\newtheorem{rem}{\indent \sc Remark}
\newcommand{\de}{\delta}
\newcommand{\ep}{\varepsilon}
\newcommand{\lam}{\lambda}
\newcommand{\om}{\omega}
\newcommand{\tht}{\theta}
\newcommand{\Ga}{\Gamma}
\newcommand{\beq}{\begin{eqnarray*}}
\newcommand{\eeq}{\end{eqnarray*}}
\newcommand{\beqn}{\begin{equation}}
\newcommand{\eeqn}{\end{equation}}
\newcommand{\nin}{\noindent}
\newcommand{\pf}{\noindent {\it Proof. \,}}
\newcommand{\R}{\mathbb{R}}
\def\v2{\vskip2mm}
\begin{document}

\begin{center}
{\Large A  renewal theorem for relatively stable variables }
\vskip6mm
{K\^ohei UCHIYAMA\footnote{Department of Mathematics,Tokyo Institute of Technology, Japan}} \\
\vskip2mm
\end{center}
\v2
\vskip6mm

\begin{abstract} 
Let $F\{dx\}$ be a relatively stable probability distribution on the whole real line and $S_n$ the random walk started at the origin with step distribution $F$. We obtain an exact asymptotic form of the Green measure  $U\{x+dy\}= \sum_{n=0}^\infty P[S_n-x \in dy]$  as $x\to \infty$ when  $S_n$ is transient and  $S_n\to \infty$ in probability.    
If $F$ is concentrated on $[0,\infty)$, it is relatively stable if and only if  $\ell(x) :=\int_0^x F\{(t,\infty)\}dt$  is  slowly varying at infinity;  our result entails that if   $F$ is non-arithmetic and relatively stable, then 
 $\lim_{x\to\infty}\, \ell(x)U\{[x, x+h)\} = h$ for each  $h>0$.  
 This 
  surpasses 
  the known  result due to Erickson \cite{Ec},  the latter assuming the stronger condition that $xF\{(x,\infty)\}$ is  slowly varying. 
  An obvious analogue also holds for  arithmetic variables. 
   \end{abstract}

{\sc Keywords}: renewal measure; relative stability; infinite mean; slowly varying.

{\sc AMS MSC 2010}: Primary 60K05;60G50.

\vskip6mm
\section{Introduction and results}

Let $X$ be a real-valued random variable  with distribution function $F(x)$, $x\in \R$ and $F\{dx\}$ the probability measure of $X$.  
Let $U$ be the associated  renewal measure (or, what is the same,  the Green measure of the random walk generated by $F$) that is given by 
$$U\{I\}= \sum_{n=0}^\infty F^{n*}\{I\} = \sum_{n=0}^\infty P[S_n \in I],$$
for finite intervals  $I\subset \R$.  Here $F^{n*}$ denotes the $n$-fold convolution of $F$ and $S_n =X_1+\cdots +X_n$ with $X_j$, $j=1, 2, \ldots$ independent copies  of $X$ and $S_0=0$.  The above sum may be infinite. Following \cite[Section VI.10]{F}  we call $F$ {\it transient} if $U\{[-y,y]\}<\infty$ for all $y>0$
 (otherwise  $U\{[-y,y]\}=\infty$ for all $y>0$).   
 Since the 1940s the asymptotic form of  $U[x,x+h):= U\{[x, x+h)\}$   has been studied in an intensive series of works,  first, for renewal processes (namely,  when $X$ assumes non-negative values only) and then,  for $X$ having a positive mean and taking both positive and negative values. For the renewal processes the now well-known classical results are  obtained  by Erd\"os-Feller-Pollard  \cite{EFP} in 1949 for lattice random variables and  immediately  afterward by Blackwell \cite{Bl} for non-lattice ones. 
After many attempts to make extensions to general distributions in  various special classes  Feller and Orey \cite{FO} proved at   last  in 1961 a general theorem that covered  the results got up to that time. It states that
   if  $F$ is transient and  non-arithmetic,  
 then  for each $h>0$, 
 \beqn\label{1}
 \lim_{x\to\infty} U[x, x+h) = \left\{ \begin{array} {ll}   h/EX \quad &\mbox{if \quad  $E|X|<\infty$ and $EX>0$},\\
 0  \quad &\mbox{otherwise},    
 \end{array}
\right.
\eeqn 
for  arithmetic walks the  obvious analogue being shown by virtually the same proof.  [In   
 \cite{FO} Fourier analytic method  is employed;  a simpler and more elementary proof of (\ref{1}) is given  in  \cite[Section XI.9]{F}.]
The purpose of the present paper is   to find    exact asymptotic forms  of $U[x,x+h)$ as $x\to\infty$ for the  relatively stable random variables with $E|X|=\infty$, which constitute a significant  class of probability laws.

For  non-negative   $X$ with $EX=\infty$  the asymptotic behaviour of $U[x,x+h)$   has been studied when  $F$ is in the domain of attraction of a stable law  \cite{GL}, \cite{Ec}, \cite{CD} etc.
It is known (obtained as  a consequence of  Karamata's Tauberian theorems as given in \cite{F}, \cite{BGT}) that
$$\ell(t):=x^{\alpha -1}\int_0^x [1-F(t)]dt \quad \mbox{is slowly varying at infinity} 
$$
if and only if 
\beqn\label{Int_RWT}
U[0,x) \sim
\kappa_\alpha x^\alpha/\ell(x) \qquad \big(\kappa_\alpha =1\big/\big[\Ga(2-\alpha)\Ga(1+\alpha)\big]\big),
 \eeqn
   where $F(x) =F(-\infty,x]$ and $\sim$ means that the ratio of its two sides converges to unity.  (A real function $f(x)$ is said to be {\it slowly varying  (s.v.)}  at infinity if $f(x)>0$ for  $x$ large enough and  $\lim_{x\to\infty}f(\lam x)/f(x) =1$ for any $\lam>1$.)
The local versions of this  have  been dealt  with.   Let $F$ be non-arithmetic for simplicity. It is shown by Erickson \cite{Ec} (cf. also  \cite{GL},  Section 8.6 \cite{BGT}) that if $\ell$ above is s.v. and $0<\alpha<1$,
 then 
 \beqn\label{Linf}
\begin{array}{ll}
(a) \quad \lim U[x,x+h)\big/\big[U[0, x)/x] = \alpha h \quad &\mbox{for}\quad \alpha >1/2,\\[1mm]
(b) \quad \liminf U[x,x+h)\big/\big[U[0, x)/x] = \alpha h \quad &\mbox{for}\quad \alpha \leq 1/2,
\end{array}
\eeqn
where $\liminf $ cannot be replaced by  $\lim$ in general \cite{W}, thus  the local version of (\ref{Int_RWT}) is true if $1/2<\alpha<1$ but not for $\alpha\leq 1/2$. 
 For $\alpha=1$   the above formula  (\ref{Linf}a) is also obtained  by Erickson  \cite{Ec} but 
under the condition that for  some s.v. function  $L$
$$
(*)\quad 1- F(x) \sim L(x)/x, \quad
$$
which is stronger than the slow variation of $\ell$. [Note that for $0<\alpha<1$ the slow variation of $\ell$ is equivalent to $1-F(x) \sim (1-\alpha)x^{-\alpha}\ell(x)$.]
In the recent article \cite{CD},  Caravenna-Doney  obtained detailed estimates of  $U[x,x+h)$  for  random walks and L\'evy processes that are in the domain of attraction of a stable law with index $0<\alpha<1$, especially they derived a necessary and sufficient condition given in terms of $F$ in order  for  the local version of (\ref{Int_RWT}) to be true. They also studied the problem when  $X$ assumes   both positive and negative values and  extended their one-sided result to that case.
When $F$ is concentrated on the integer lattice $\mathbb{Z}$, is  transient and belongs to the Cauchy domain of attraction  Berger \cite[Theorem 3.6 ]{Ber} obtained an asymptotic form of the  function $U\{n\}$  under some additional assumption on $P[X=n]$.

The question when the local version of (\ref{Int_RWT}) holds seems  to have been well addressed
in most cases, but there is still  an issue that remains  to be worked out:  
  To ensure  (\ref{Linf}) for $\alpha=1$  can one weaken condition ($*$)? What about the case $X$ takes vales of both signs?
In this article, we consider a transient random walk on the whole real line $\mathbb{R}$  
whose  step distribution is relatively stable. 
 The random variable  $X$ (or its distribution  $F$)  is said to be {\it relatively stable} ({\it r.s.})  if there exist  real norming  constants  $\lam_n$ such that
 $S_n/\lam_n$ converges to unity in probability;
in this case   $\lam_n$ are regularly varying with index 1 and eventually  positive or eventually negative  \cite{R76} and after \cite{KM0} we say that $X$ is {\it positively r.s.}  in the former case   and {\it negatively r.s.} in the latter.  In the sequel we shall always  suppose that
\beqn\label{EX}
E|X|=\infty
\eeqn
and  find the exact asymptotic form of $ U[x, x+h)$ as $x\to\infty$ when  $F$ is positively r.s. as well as  transient,
 the result entailing that  $U[x,x+h)$ is s.v. at infinity; 
  in particular  we shall improve on  the above mentioned results of  \cite{Ec} and \cite{Ber}  by removing the extra conditions on $F$  (see Corollary \ref{cor2} for the former one and Section 4.3 for the latter).  For one-dimensional  random walks attracted to a stable law of exponent  $1\leq \alpha\leq 2$, 
  the situation where  the ascending ladder height variable, say $Z$, is r.s. naturally arises.  
    Since  the relative stability of $Z$ is equivalent to the slow variation of $\int_0^xP[Z>t]$,  Erickson's result does not apply to  the corresponding renewal measure without assuming some extra condition that ensures $(*)$ for the distribution of $Z$, but it is not easy to  specify an appropriate one in terms  of the step distribution of the walk.  The author encountered  such a situation in studying the two-sided exit problem  of random walks   \cite{Uexit} and got an interest in the present study.

\v2
  
 To state the result, we introduce notation. 
Put for $x\geq 0$, 
$$H(x) =1-F(x) + F(-x-0), \quad K(x)= 1-F(x) - F(-x-0), $$
$$\ell(x) = \int_0^x H(t)dt \quad\mbox{and }\quad A(x) = \int_0^x K(t) dt.$$
Since $E|X|=\infty$,  we have $H(x)>0$  for all  $x$.   We require the following conditions:
 \beqn\label{A_sv}
 \left\{
 \begin{array}{ll}
{\rm (Ha)}\;\quad A(x)/xH(x) \to \infty \quad (x\to\infty).\\[2mm]
{\rm (Hb)}\quad  \int_{x_0}^\infty H(x)dx/A^2(x) <\infty\quad \mbox{for some  $x_0>0$.} 
\end{array} \right. \qquad
\eeqn
Condition (Ha) is equivalent to positive relative stability of the walk \cite{R76}, \cite{M}, and (Hb) is equivalent to transience of it under (Ha) as being ensured by the  theorem below.  A simple example that satisfies   (Hab)---conjunction of (Ha) and (Hb)---is provided by any distribution such that  $F(-x) \sim (1-p)L(x)/x$ and $1-F(x)  \sim pL(x)/x$ as $x\to\infty$ with $1/2 < p\leq 1$ and  $L$ a s.v.\,function satisfying  $\int_1^\infty L(t) dt/t=\infty$
(see Section 4.3  where  the case $p=1/2$ is also discussed). One can easily deduce   that both $A$ and $\ell$ are  s.v. under (Hab) (see Remark \ref{rem1} below). The condition (Hb)   entails that the derivative $(1/A)'(x) = -K(x)/A^2(x)$ is integrable about $x=\infty$ and $\limsup A(x)=\infty$. Hence  
\beqn\label{1/A}
\int_x^\infty \frac{K(t)}{A^2(t)}dt = \frac{1}{A(x)}  \quad  (x>x_0) \quad \mbox{under \; (Hb)},
\eeqn
entailing  $A(x)\to\infty$.

We call  $X$  (or $F$) {\it arithmetic} if $X/h_0\in \mathbb{Z}$ almost surely (a.s.) for some $h_0>0$ and {\it non-arithmetic} otherwise. We shall suppose $F$ is non-arithmetic,   the arithmetic case being similarly dealt with in a simpler way. Put
\[
r_+(x) = \int_x^\infty\frac{1-F(t)}{ A^2(t)}dt   \quad \mbox{ and} \quad r_-(x)=  
 \int_{|x|}^\infty\frac{F(-t)}{ A^2(t)}dt.
 \]  
\v2
\noindent
{\bf Theorem.}  \, {\it Suppose that   (Ha)  is satisfied.
 Then $F$ is transient  if and only if (Hb) holds, and   if this is the case and  $F$ is non-arithmetic, then   for each  $h>0$,  
 \beqn\label{eq_T2}
  \frac{U(x, x+h]} {h} =
  \left\{ \begin{array}{ll} r_+(x) \{1+o(1)\}  \quad & \mbox{as $x\to +\infty$,}\\[1mm]
r_-(x) +o\big(r_+(|x|)\big)  \quad & \mbox{as $x\to -\infty$.}
 \end{array} \right.   
 \eeqn}

\v2

By (\ref{1/A})  $ r_+(x) =r_-(x)+1/A(x)\geq  r_-(x)\vee[1/A(x)$, $x>x_0$. 
 If $F(-x)/[1-F(x)]$ is bounded away from zero, then  the error term $o(r_-(|x|))$ is really negligible  in (\ref{eq_T2}), so that   ${U(x, x+h]}$ and ${U(-x, -x+h]}$ are comparable as $x\to\infty$ in spite of the fact that for $n$ large enough  $S_n$ is located around $\lam_n \sim nA(\lam_n)$ with overwhelming probability (under (Ha))---although $\liminf S_n=-\infty$ a.s. (see Remark \ref{rem6}). 
   If $F(-x)/[1-F(x)]$ tends to zero (or less rigidly  $r_-(x)/r_+(x) \to 0$; see Remark \ref{rem6}), then  $A(x)\sim \ell(x)$ and  the second case of (\ref{eq_T2}) comes down to merely  $U(x,x+h] =  o(1/\ell(x))$  ($x\to-\infty$) while $U(x,x+h] = h/\ell(x)$ as $x\to +\infty$.  Note that $r_+$ is s.v.\,whenever (Hab) holds, since then $x[1-F(x)]/ [A^2(x)r_+(x)] \leq xH(x)/A(x)\to 0$.  
   
  If $F$ is arithmetic of span 1 and recurrent,  one has the potential function $a(x) =\sum_{n=0}^\infty(P[S_n=0] - P[S_n=-x])$ that is well defined for every integer $x$ and plays a crucial role in the potential theory of random walk  [20, Section 28].  If such an $F$ satisfies (Ha) ($E|X|$ may be finite), then $a(x)$ admits  an asymptotic estimate analogous to (7) (see [22, Theorem 7]). 
 
\v2
  
Let (Hab) hold and define a function  $m(x)$ via
\[
\frac1{m(x)} = \int_{x}^\infty \frac{H(t)dt}{A^2(t)}, \qquad x>x_0.
\]
 Comparing this integral  with the corresponding one for $1/A(x)$ given in  (\ref{1/A})  one obtains
  \beqn\label{m/A/ell}
m(x)\leq A(x)\leq \ell(x).
\eeqn  
It is also noted that 
\beqn\label{1/ell}
 \frac1{\ell(x)} =\int_x^\infty \frac{H(t)}{\ell^2(t)}dt.
\eeqn

Formula (\ref{eq_T2})  is simplified under  the following specific condition
$${\rm (Hc)}\;\quad  \;\; \kappa := \lim_{x\to\infty} m(x)/A(x)  
$$
with the understanding that it entails (Hb). For $\kappa>0$, (Hc) is equivalent to $\lim A(x)/\ell(x)=\kappa$ under (Hab) (see Remark \ref{rem4}).
By  (\ref{1/A})  and (\ref{1/ell}) $r_+(x)-r_-(x) = 1/A(x)$ and $r_+(x) + r_-(x) =1/m(x)$. By these    identities  our  Theorem    reduces to   the following result. 
\v2
\nin
\begin{corollary} \label{cor1} 
  If   (Hac)   holds, 
  then $F$ is transient and  for each  $h>0$, 
 $$ m(|x|)\frac{U(x, x+h]}{h} \, \longrightarrow \,
  \left\{ \begin{array}{ll}
\frac12 (1 + \kappa)  \quad &\mbox{as}\;\;  x \to + \infty, 
   \\[1mm]
  \frac12 (1 - \kappa)  \quad &\mbox{as}\;\;  x \to  -\infty.
 \end{array}\right.
  $$
\end{corollary}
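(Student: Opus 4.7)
The plan is to deduce Corollary \ref{cor1} as a direct algebraic consequence of the Theorem, together with two elementary identities relating $r_+$, $r_-$, $A$, and $m$. Since (Hc) is stipulated to entail (Hb), the hypotheses of the Theorem are in force, so $F$ is transient and (\ref{eq_T2}) applies.

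The key step is to verify the pair of identities
$$r_+(x) - r_-(x) = \frac{1}{A(x)}, \qquad r_+(x) + r_-(x) = \frac{1}{m(x)}$$
for all sufficiently large $x$. Writing $K(t) = [1-F(t)] - F(-t-0)$ and $H(t) = [1-F(t)] + F(-t-0)$, subtraction of the defining integrals of $r_\pm$ yields $r_+(x)-r_-(x) = \int_x^\infty K(t)/A^2(t)\,dt = 1/A(x)$ by (\ref{1/A}), whereas addition gives $\int_x^\infty H(t)/A^2(t)\,dt$, which is exactly $1/m(x)$ by the definition of $m$. Solving for $r_\pm$ one obtains
$$m(x)\,r_\pm(x) \;=\; \frac{1}{2}\!\left(1 \pm \frac{m(x)}{A(x)}\right),$$
and by (Hc) the right side converges to $\frac{1}{2}(1\pm \kappa)$ as $x\to\infty$.

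The conclusion now follows by inserting these asymptotics into (\ref{eq_T2}). For $x\to+\infty$, multiplying by $m(x)$ gives $m(x)\,U(x,x+h]/h \to \frac{1}{2}(1+\kappa)$ at once. For $x\to-\infty$, multiplying by $m(|x|)$ gives
$$m(|x|)\,\frac{U(x,x+h]}{h} \;=\; m(|x|)\,r_-(x) \;+\; o\!\left(m(|x|)\,r_+(|x|)\right);$$
the first term tends to $\frac{1}{2}(1-\kappa)$, while the error is $o(1)$ because $m(|x|)\,r_+(|x|) \to \frac{1}{2}(1+\kappa)$ is bounded.

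I do not anticipate a substantive obstacle: the Corollary is essentially a repackaging of the Theorem under the additional hypothesis (Hc). The only point requiring attention is to confirm that the error term $o(r_+(|x|))$ in the $x\to-\infty$ case of (\ref{eq_T2}) survives multiplication by $m(|x|)$ as $o(1)$, and this is guaranteed precisely by the identity $r_++r_- = 1/m$.
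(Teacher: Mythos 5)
Your proposal is correct and follows exactly the route the paper takes: the paper's own derivation of Corollary \ref{cor1} consists precisely of noting $r_+(x)-r_-(x)=1/A(x)$ and $r_+(x)+r_-(x)=1/m(x)$ (the former from (\ref{1/A}), the latter from the definition of $m$) and then substituting into (\ref{eq_T2}), with the $x\to-\infty$ error term absorbed because $m(|x|)r_+(|x|)$ is bounded. There is nothing to add.
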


If $X$ assumes only non-negative values so that
$\ell(x)=\int_0^x(1-F(t))dt$ agrees with both $m(x)$ and $A(x)$, Corollary \ref{cor1}  further
reduces to
 
\begin{corollary} \label{cor2}  If $\ell$ is s.v. and $X$ is non-negative a.s. and non-arithmetic, then  it holds that
 \beqn\label{m_res}
\lim_{x\to\infty} \, \ell(x)U[x, x+h)  = h  \quad \mbox{for each} \quad h>0. 
\eeqn
 \end{corollary}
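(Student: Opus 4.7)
The plan is to deduce the corollary as a direct specialization of the Theorem. When $X \geq 0$ almost surely, $F(-x-0)$ vanishes for $x > 0$, so $K(x) = H(x) = 1 - F(x)$ and $A(x) = \ell(x)$; consequently $r_-(x) \equiv 0$ and the identity (\ref{1/A}) already gives
$$r_+(x) = \int_x^\infty \frac{H(t)}{\ell^2(t)}\, dt = \frac{1}{\ell(x)}.$$
Thus, once I verify that (Ha) and (Hb) are in force under the single assumption that $\ell$ is slowly varying, the Theorem directly yields $U[x, x+h)/h = (1+o(1))/\ell(x)$, which is the assertion.

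To check (Ha) I exploit monotonicity of the survival function. Since $H$ is non-increasing,
$$\ell(2x) - \ell(x) = \int_x^{2x} H(t)\, dt \geq x H(2x).$$
Slow variation of $\ell$ forces the left side to be $o(\ell(x))$, so $xH(2x) = o(\ell(2x))$, i.e.\ $xH(x)/\ell(x) \to 0$, which is precisely (Ha) in the form $A(x)/xH(x) \to \infty$.

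For (Hb) the calculation is immediate: $\ell$ is absolutely continuous with density $H$, so
$$\int_{x_0}^\infty \frac{H(x)}{\ell^2(x)}\, dx = \frac{1}{\ell(x_0)} - \lim_{x\to\infty} \frac{1}{\ell(x)} \leq \frac{1}{\ell(x_0)} < \infty,$$
which is (Hb) (and is also a direct instance of the identity (\ref{1/ell})). With both hypotheses satisfied and $r_+(x)$ identified with $1/\ell(x)$, the Theorem delivers the corollary.

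There is no real obstacle beyond this bookkeeping: the only step requiring any thought is the Karamata-type argument that extracts $xH(x) = o(\ell(x))$ from the mere slow variation of $\ell$, and even there the monotonicity of $H$ makes the argument elementary. The merit of the statement is not in the proof but in the clean form of the conclusion, and in the fact that it requires no auxiliary hypothesis such as $(*)$.
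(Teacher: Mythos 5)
Your proof is correct, and it takes a genuinely different route from the one the paper executes, though the paper explicitly acknowledges that this route is available. You deduce Corollary~\ref{cor2} as a specialization of the Theorem by (i) noting that in the one-sided case $K = H$, $A = \ell$, $r_- \equiv 0$, and $r_+(x) = 1/\ell(x)$, and (ii) verifying that the sole hypothesis ``$\ell$ slowly varying'' (together with the standing assumption $E|X|=\infty$) forces both (Ha) and (Hb). The key step, extracting $xH(x)=o(\ell(x))$ from slow variation of $\ell$ via the monotonicity bound $\ell(2x)-\ell(x)\ge xH(2x)$, is correct and is essentially the standard Karamata argument; the (Hb) check is immediate from $\ell(\infty)=EX=\infty$. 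The paper, by contrast, deliberately gives two \emph{direct} proofs of Corollary~\ref{cor2} (Sections 2 and 3): the first hinges on Lemma~\ref{lem1}, which replaces Erickson's condition $(*)$ with an integrated (Ces\`aro) estimate of $C(\theta)=\Re\,[1-\phi]^{-1}$, and then follows Erickson's Fourier-cosine machinery; the second relies on the sine part $S(\theta)$ instead. The paper's stated reasons for doing so are that the direct proofs are simple and important on their own, and---more to the point---that they serve as a pedagogical scaffold for the two-sided Theorem, whose proof in Section 4 merges the cosine and sine arguments. Your route is shorter on paper but leans on the full weight of the Theorem; the paper's route is self-contained at the one-sided level and exposes the analytic mechanism. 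One small cosmetic point: the Theorem is stated for $U(x,x+h]$ while the Corollary uses $U[x,x+h)$; reconciling the two endpoint conventions requires a one-line sandwiching argument using slow variation of $\ell$, which you might mention, though the discrepancy is harmless.
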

 
When $F$ is  arithmetic,  the analogous result---i.e., (\ref{m_res}) holds if  $h$ is a multiple of the span of $F$---is verified in \cite[Appendix(B)]{Upot} with  a relatively simpler proof.  

\v2
\begin{rem}\label{rem1}    Condition (Ha) 
 implies 
  $$({\rm Ha}')\;\;\quad  \exists x_0>0, \;\; A(x) \; \mbox{is positive for  $x\geq x_0$  and  s.v.\,as $x \to +\infty$.}$$
 In fact $\log [A(x)/A(x_0)] = \int_{x_0}^x \ep_A(t)dt/t$ with $\ep_A(t):= tK(t)/A(t)$ so that $A$ is a (normalized) s.v.\!\! function under (Ha). [$m$  admits a  similar relation with $\ep_m(t)= tm(t)H(t)/A^2(t)$ and similarly for $\ell$.]  
 If  (Hc) holds with $\kappa>0$, then  the converse is true, i.e.,  (Ha) follows from 
  (Ha$'$c), since (Hc) implies  $A(x)/\ell(x)\to \kappa$ (see Remark \ref{rem4} below) and then the slow variation of $A(x)$ implies (Ha). Thus    (Hac) is equivalent to (Ha$'$c) in the case $\kappa >0$.   
 \end{rem}
 
 \begin{rem}\label{rem2}  The conditions (Ha) and (Hb) are not comparable, i.e., there exist two examples of $F$, one satisfying  (Ha) but violating  (Hb) and the other  satisfying  (Hb) but violating (Ha) (see Section 4.3). 
 \end{rem}
  
\begin{rem} \label{rem4}  Let  (Hab) hold.  If  $A(x)/\ell(x)$ approaches a positive constant as $x\to\infty$, then   (Hc) holds (necessarily with $\kappa =\lim A(x)/\ell(x)$). Converse is true if $\kappa>0$,  as is readily  verified by the identity
\[\ell(x) + \frac{A^2(x)}{m(x)} =2\int_{x_0}^x \frac{A(t)}{m(t)}K(t)dt + C.
\]
\end{rem}

\begin{rem}\label{rem3}  Suppose (Hab) to hold and consider the condition
 \[ 
{\rm (S)} \quad \exists C <\infty, \;\;  m(x)\ell(x) < CA^2(x) \quad \mbox{for all sufficiently large $x$}.
\] 
If (S) is valid, then   $ A(x)/\ell(x) \to  0$ implies (Hc)  with $\kappa =0$     so that 
$U(x, x+h]/h \sim 1/2m(x) $ as  $x\to \pm \infty$. Under some mild  regularity  conditions (S) holds.
Schwarz' inequality yields
 \[
 \frac1{m(x)\ell(x)} \geq \bigg[\int_x^\infty  \frac{H(t)}{\ell(t) A(t)}dt\bigg]^2,
 \]
 which shows that  (S) is satisfied 
 if  $\ell(x)/A(x)$ is almost increasing (i.e.,   for some constants $C$ and $x_0$, 
   $C \ell(y)/A(y) \geq \ell(x)/A(x)$   if $y>x >x_0$). Indeed,  using (\ref{1/ell}) the integral in the square brackets is  bounded from  below by $\inf_{t\geq x}[\ell(t)/A(t)] /\ell(x) \geq C^{-1}/A(x)$.  
     By (\ref{1/A}) (S) also follows from the inequality  $K(x)/A(x)\leq C H(x)/\ell(x)$   to be valid for all large $x$.
  \end{rem}
  
  \begin{rem}\label{rem5} Let (Hab) hold. The condition  $r_-(x)/r_+(x)\to 0$ is equivalent to $A(x)/m(x)\to 1$ and in this case $\ell(x)/m(x)\to1$. Combining this fact with 
 our Theorem we  infer that all these conditions are equivalent to one another,  and  are satisfied if and only if  
  \beqn\label{U/-/+}
\lim_{x\to \infty}  \frac{U(-x,-x+h]}{U(x,x+h] } =0  \quad \mbox{  for each/some} \; h>0.
  \eeqn
 The equivalence  stated first follows from $1/A =r_+-r_-$ and $1/m= r_++r_-$.
  If $m(x)/A(x) \to 1$, then  $m'(x)= \big[m(x)/A(x)\big]^2H(x) \sim H(x)$, whence $m(x) \sim \ell(x)$ as asserted.
  \end{rem}

  \begin{rem}\label{rem6}  Put $\ell_+(x)= \int_0^x (1-F(t))dt$. Then  according to \cite[Corollary 2]{Ec2}  $S_n< 0$  only finitely many times with probability one  or what amounts to the same \cite[Theorem XII.2.1]{F},
 \v2
  $(\sharp)$ \quad $\lim S_n = \infty$ \; a.s.
 \v2\noindent
    if and only if     $\int_{-\infty}^{-1} |y|dF(y) /\ell_+(-y) <\infty$. Under (Ha) this summability condition is equivalent to 
   $\int_1^\infty F(-x)dx/\ell_+(x)<\infty$; 
hence  $\liminf S_n = -\infty$  a.s.   if $\liminf F(-x)/H(x)>0$ (as alluded to previously). 
Under (Hab) it also holds  that 
  $(\sharp)$ implies (\ref{U/-/+})    (but the converse is not true). 
Indeed,   if $\int_1^\infty F(-y)dy/\ell_+(y)<\infty$, then $\int_0^xF(-y)dy\big/\ell_+(x)\to 0$, so that  $A(x)\sim \ell(x)$, which is equivalent to (\ref{U/-/+})  in view of Remark \ref{rem5}.
   \end{rem}
  \v2
\v2

Corollary \ref{cor2} follows immediately from Theorem  (in fact from Corollary \ref{cor1})  as a special case so that we have only to prove   the latter. However, we prove them separately. An obvious reason for this is the importance of Corollary \ref{cor2} and the simplicity of its proof. There is another reason.  Their proofs   are both made along the lines 
of the proof given  by  Erickson \cite{Ec}, which is however  directly applicable only for the proof of Corollary \ref{cor2}.  In fact, we give two proofs of Corollary \ref{cor2}, one 
uses a Fourier cosine representation of $U$ as in \cite{Ec}, while the other  uses the corresponding  Fourier sine representation, unlike \cite{Ec}.  The proof of Theorem 
 is carried out  by  combining these two approaches by  modifying the arguments  in  \cite{Ec}.

We  shall give  proof of Corollary \ref{cor2}  in Section 2.   Another proof of it will be provided  in Section 3;  to this end we shall  describe  the main steps of  the proof given  in \cite{Ec}   under $(*)$,  which  will also prepare for  the proof of Theorem  that will be given in Section 4.

\section{A lemma and Proof of Corollary \ref{cor2}}
 Corollary \ref{cor2}  follows from  the arguments of \cite{Ec} if  we prove Lemma \ref{lem1}  below. 
Suppose that $X$ is non-negative and non-arithmetic  and $\ell$ is s.v. 
It  follows that as $x\to\infty$, 
\beqn\label{rv0}
x[1- F(x)] = o(\ell(x)),\; \int_0^x tdF(t)  \sim \ell(x),
\eeqn
\beqn\label{r.v.}
 \; \int_0^x t^2dF(t) \leq  2\int_0^x t [1-F(t)]dt =o\big(x\ell(x)\big)
\eeqn
and 
\beqn\label{s.v.} \int_x^\infty \frac{tdF(t)}{\ell^2(t)} \sim   \int_x^\infty \frac{1-F(t)}{\ell^2(t)}dt = \frac{1}{\ell(x)}.
\eeqn
(The proofs of these relations are standard  in view of  the fundamental properties of regularly varying functions for which  the readers are referred to \cite{BGT} or \cite{F}.)
Let 
$$\phi(\theta) = E[e^{i \theta X}].$$ 
 the characteristic function of $X$. Then, 
 with the help of  (\ref{r.v.}) one deduces from (\ref{rv0})  that
$$\Re [1-\phi(\theta)]  = \int_0^{\infty} [1- \cos \theta x] \,dF(x) = o\big(\theta\ell(1/\theta)\big),$$
$$\Im  [1-\phi(\theta)]  = -\int_0^\infty\sin \theta x\, dF(x)
 \sim -\theta \ell(1/\theta)$$
as $\theta \to 0$. 
Hence 
\beqn\label{phi}
1-\phi(\theta) = -i \theta \ell(1/|\theta|)\{1+o(1)\}  \qquad (\theta \to 0).
\eeqn
In \cite{Ec}    ($*$)  is used to have $ \Re [1-\phi(\theta)]  \sim \theta L(1/\theta)$, which cannot be obtained under the present assumption. Fortunately what is actually needed is the  result given by Lemma \ref{lem1} below.
Let us write  $C(\theta)$ and $S(\theta)$ for the real and imaginary parts of $1/[1-\phi(\theta)]$:
$$C(\theta) = \Re \frac1{1-\phi(\theta)},\quad   S(\theta)= \Im \frac1{1-\phi(\theta)} \qquad(\tht\neq0). $$
  Note that   $\phi(\tht)\neq 0$ for all $\tht\neq 0$
since  $F$ is  non-arithmetic.
\v2
\nin
\begin{lemma}\label{lem1}  
$$J(\theta): = \int_0^\theta C(t) dt \sim \frac{\pi/2} {\ell(1/\theta)} \qquad \mbox{as $\theta \to +0$}.$$
\end{lemma}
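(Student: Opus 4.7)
The plan is to exploit the asymptotic $1-\phi(t)=-it\ell(1/|t|)\{1+o(1)\}$ from $(\ref{phi})$ to replace $|1-\phi(t)|^2$ by $t^2\ell^2(1/t)$ inside the defining integral for $J(\theta)$, then apply Fubini and the substitution $u=tx$ to cast the question in terms of the classical kernel $(1-\cos u)/u^2$ weighted by $1/\ell^2(x/u)$, and finally analyze the resulting double integral by splitting the $dF(x)$ mass into three regions.

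From $(\ref{phi})$ one has $|1-\phi(t)|^2 = t^2\ell^2(1/t)\{1+o(1)\}$ as $t\to 0+$, uniformly for $t\in(0,\theta)$ with $\theta$ small, so
$$J(\theta) \;=\; \{1+o(1)\}\int_0^\theta\frac{\Re[1-\phi(t)]}{t^2\ell^2(1/t)}\,dt \;=\; \{1+o(1)\}\int_0^\infty x\,I(x,\theta)\,dF(x),$$
where, after expanding $\Re[1-\phi(t)]=\int_0^\infty(1-\cos tx)\,dF(x)$, applying Fubini, and substituting $u=tx$ in the inner integral,
$$I(x,\theta) \;:=\; \int_0^{\theta x}\frac{1-\cos u}{u^2\ell^2(x/u)}\,du.$$
Since $\int_0^\infty(1-\cos u)/u^2\,du = \pi/2$ and $\ell(x/u)\sim\ell(x)$ pointwise by slow variation, the heuristic is $\ell^2(x)\,I(x,\theta)\to \pi/2$ as $x$ and $\theta x\to\infty$; combined with $(\ref{s.v.})$ this would give $J(\theta)\sim(\pi/2)/\ell(1/\theta)$.

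To make this rigorous I split the outer integral at $x=1/\theta$ and $x=M/\theta$ with $M$ large. On $(0,1/\theta)$, since $\theta x\leq 1$ the estimate $1-\cos u\leq u^2/2$ together with the monotonicity bound $\ell(x/u)\geq\ell(1/\theta)$ yields $I(x,\theta)\leq\theta x/(2\ell^2(1/\theta))$, and by $(\ref{r.v.})$ the resulting contribution is $o(1/\ell(1/\theta))$. On the transition $(1/\theta,M/\theta)$, a Potter bound $\ell^2(x)/\ell^2(x/u)\leq Cu^{2\delta}$ (valid for $u\geq 1$, $x/u$ large, any $\delta<1/2$) combined with the trivial bound on $u\in(0,1)$ gives $I(x,\theta)\leq C'/\ell^2(x)$; together with $(\ref{s.v.})$ and $\ell(M/\theta)\sim\ell(1/\theta)$, this piece is again $o(1/\ell(1/\theta))$. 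On $(M/\theta,\infty)$, dominated convergence with dominating function $((1-\cos u)/u^2)\cdot\max(1,Cu^{2\delta})$ (integrable because $\delta<1/2$) yields $\ell^2(x)\,I(x,\theta)\to\pi/2$ uniformly as $x\to\infty$ with $\theta x\to\infty$; another application of $(\ref{s.v.})$ then produces the dominant $(\pi/2)/\ell(1/\theta)$.

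The main obstacle is maintaining uniformity in the Potter step: the bound $\ell^2(x)/\ell^2(x/u)\leq Cu^{2\delta}$ requires $x/u\geq T$ for some threshold $T$, which together with $u\leq\theta x$ forces the restriction $\theta\leq 1/T$. This is harmless in the limit $\theta\to 0+$, but demands careful bookkeeping to ensure the error terms in all three pieces are simultaneously $o(1/\ell(1/\theta))$. Granted this, combining the regional estimates delivers $J(\theta)\sim(\pi/2)/\ell(1/\theta)$.
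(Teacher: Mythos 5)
Your approach mirrors the paper's: write $J(\theta)=\int_0^\theta|1-\phi(t)|^{-2}dt\int_0^\infty(1-\cos tx)\,dF(x)$, interchange the order of integration, split the $x$-range at $M/\theta$, show the contribution from small $x$ is negligible using (\ref{r.v.}), and read off the main term from the range $x>M/\theta$ via (\ref{s.v.}). The paper handles the inner $t$-integral by splitting it into $(0,1/Mx)$, $(1/Mx,M/x)$, $(M/x,\theta)$ and bounding each piece directly from (\ref{phi}), whereas you substitute $u=tx$ and invoke Potter bounds plus dominated convergence to show $\ell^2(x)\int_0^{\theta x}(1-\cos u)/[u^2\ell^2(x/u)]\,du\approx\pi/2$. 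These are equivalent treatments of the same quantity; the paper's version is a bit more elementary (no Potter bounds needed). One imprecision to flag: on the range $x>M/\theta$ you only have $\theta x>M$, not $\theta x\to\infty$, so dominated convergence gives $\ell^2(x)I(x,\theta)=\pi/2+O(1/M)+o(1)$ uniformly, not $\to\pi/2$. You must carry the $O(1/M)$ through and dispose of it by letting $M\to\infty$ after $\theta\to0$, which is precisely how the paper closes the argument. Your extra split at $x=1/\theta$ is harmless but can be absorbed into the small-$x$ piece by bounding $1-\cos u\leq u^2/2$ on $u\in(0,M)$.
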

\begin{proof}  On noting $J(\theta)   =\int_0^\tht  |1-\phi(t)|^{-2}dt\int_0^\infty (1-\cos tx)dF(x)$,  split the range of the inner  integral on the RHS at $M/\theta$ with any number  $M>1$  and accordingly decompose $J(\theta)$ as the sum of 
two repeated integrals. 
After interchanging the order of integration for the integral over $x>M/\theta$, this results in   $J(\theta)= J_1(\theta)+ J_2(\theta)$, where
$$J_1(\theta) = \int_{M/\theta}^\infty dF(x) \int_0^\theta \frac{1-\cos xt}{|1-\phi(t)|^2}dt,$$
$$J_2(\theta) = \int_0^\theta \frac{dt}{|1-\phi(t)|^2} \int_0^{M/\theta} (1-\cos tx) dF(x).$$ 

First we see  that $J_2(\theta)$ is negligible.   The integrand of the integral  defining  $J_2(\theta)$  is less than 
 $$ \frac{t^2/2}{\,|1-\phi(t)|^2}\int_0^{M/\theta} x^2 dF(x) =\frac1{\ell^2(1/t)} \times o\big( \theta^{-1}\ell(1/\theta)\big),$$
 where (\ref{phi}) as well as (\ref{r.v.}) is used for the equality.
 This immediately yields   $J_2(\theta)\ell(1/\theta)\to 0$.
 
In order to estimate $J_1(\theta)$  we  break the inner integral into three parts  
and   using  (\ref{phi}) we deduce that   uniformly for $x>M/\theta$, as $\theta \downarrow 0$,
$$\int_0^{1/Mx} \frac{1-\cos xt}{|1-\phi(t)|^2}dt \leq  x^2\int_0^{1/Mx} \frac{dt}{\ell^2(1/t)}  \sim  \frac{x/M}{\ell^2(x)},$$  
$$\int_{M/ x}^{\theta} \frac{1-\cos xt}{|1-\phi(t)|^2}dt \leq  \int_{M/x}^1 \frac{\{2+o(1)\}dt}{t^2\ell^2(1/t)}
 \sim  \frac{2x/M}{\ell^2(x)}$$
 and 
 $$\int_{1/Mx}^{M/x}  \frac{1-\cos xt}{|1-\phi(t)|^2}dt \sim \frac1{\ell^2(x)} \int_{1/Mx}^{M/x}  \frac{1-\cos xt}{t^2}dt = \frac{x}{\ell^2(x)} \int_{1/M}^{M}  \frac{1-\cos t}{t^2}dt. $$
 Hence
 $$J_1(\tht) = \int_{M/\tht}^\infty \frac{xdF(x)}{\ell^2(x)} \Big\{\frac\pi{2} +O(1/M)\Big\}.$$
and  in view of (\ref{s.v.})  
 $J_1(\theta)\ell(1/\theta)  \to \frac12 \pi$, for $M$ can  be made  arbitrarily large.
This concludes the proof of the lemma. 
 \end{proof}

In  \cite{Ec} the auxiliary condition ($*$) is used to obtain the estimate of Lemma \ref{lem1} (as well as of (\ref{phi})) and once it is established  the proof of (\ref{m_res}) proceeds without directly using $(*)$ (see   Section 3.1  of this paper). With Lemma \ref{lem1} at hand we can accordingly  follow the arguments given in Section 5  of \cite{Ec}  to complete the proof of Corollary \ref{cor2}. 

\section{Proof of Corollary \ref{cor2} based on  the estimate of $S(\tht)$}

 In  this section, comprising  three subsections,  we point out that  the proof of Corollary \ref{cor2} can be based on   (\ref{phi}) rather than   Lemma \ref{lem1}; this way  is rather simpler, being   performed without
 resorting any estimate of $C(\theta)$ except for its summability in a neighbourhood of zero that is valid quite generally  (see (\ref{eq10})). The exposition given in this section also prepares for the proof of Theorem.  In the first  two subsections the entire mass of $F$ is supposed to concentrate on $[0,\infty)$. 
   In the last subsection $F$ is allowed to have  a mass on the negative half-line and we obtain a general analytic  result  that  is fundamental in our proof of Theorem. 
 \v2
 
{\bf 3.1.}  In this subsection  we describe the main steps of the proof of (\ref{m_res})  according to  \cite{Ec}.
For a real   function $g\in L_1(\R)$  let $\hat g$ be its Fourier  transform: 
$$\hat g(x)= \int_{\mathbb{R}} e^{-ix\tht}g(\tht)d\tht.
$$
Then an elementary manipulation leads to   
\beqn\label{eq1}
\int_{\R} e^{i\theta \xi} \hat g(\xi)U\{x+d\xi\} = \int_\R U\{dy\}\int_\Re e^{i(y-x)u}g(\tht-u)du,
\eeqn 
provided that the integral on the LHS is absolutely convergent for which it is sufficient that
\beqn\label{eq0}
 \hat g(x)= O(x^{-2}) \quad (|x|\to\infty)
 \eeqn
since $U(x,x+1]$ is bounded. 
 Below  $g$ is always assumed to be continuous and piecewise smooth ($C^2$-class  is enough) up to the end-points and vanish outside a compact set, which in particular ensures (\ref{eq0}).

 Take the particular function  $g=g_a$, $a>0$ defined by 
$$g_a(\tht)=\left\{ \begin{array} {ll} a^{-1}(1-|\tht|/a)\quad & |\tht|\leq a, \\
0 &|\tht|>a, 
\end{array}\right.
$$
 whose  Fourier transform is equal to  
$\hat g_a(x) =2(1-\cos(ax))/ a^2x^2$.
It is shown  \cite[Lemma 8]{Ec} that given a constant $\nu\geq 0$ and  a family of locally finite measures  $(\mu_x)_{x>0}$ on $\R$, 
 \beqn\label{ET} 
\begin{array}{ll} 
\mbox{ \it if}\quad  \forall a>0,  \forall \tht\in \R,\quad {\displaystyle
\lim_{x\to\infty} \frac1{2\pi}\int_\R e^{i\tht \xi}\hat g_a(\xi)\mu_x\{d\xi\}  =}\nu g_a(\tht),\\[3mm]
\mbox{\it then}\quad \forall h> 0, \forall y\in \R, \quad \lim_{x\to\infty}\mu_x\{[y, y+h)\} = \nu h.
\end{array} \qquad
\eeqn 
(Here the trivial case $\nu=0$ is included,  although it is not explicitly stated in \cite{Ec}.)
On applying this result to $\mu_x\{d\xi \} =\ell(x) U\{x+d\xi\}$ it suffices  for the proof of Corollary \ref{cor2} to show that for each $a>0$ and $\tht \in \mathbb{R}$, as $x\to \infty$
\beqn\label{eq4} 
\frac1{2\pi}\int_\R e^{i\tht \xi}\hat g_a(\xi)U\{x+d\xi\} \sim \frac{g_a(\tht)}{\ell(x)}.
\eeqn
Here and in the sequel  the sign $\sim$ is understood in the obvious way for the case $g_a(\tht)=0$.

In order to verify (\ref{eq4})  the relation (\ref{eq1}) or its variant will be  used. On  formal level  one  might interchange the order of
the repeated integral on its RHS and derive  $\int_\R e^{iyu}U\{dy\} = \sum_{0}^\infty \phi^n(t)= 1/[1-\phi(u)]$ to find
\beqn\label{eq2}
\int_{\R} e^{i\theta \xi} \hat g(\xi)U\{x+d\xi\} =  \int_\R  \frac{g(\tht-u)}{1-\phi(u)}e^{-ixu} du.
\eeqn 
We shall see this identity is valid but the difficulty of direct  approach  would be obvious: 
actually  $U\{\R\}=\infty$, so that  the Fourier-Stieltjes transform of $U(d\xi)$ does not exist in a usual sense.   

Following \cite{FO}   
we  make the decomposition  $U\{I\} =V\{I\}+ V^*\{I\}$, where
$$V\{I\} = \frac12 \big[U\{I\}+ U\{-I\}\big], \quad V^*=\frac12 \big[U\{I\}- U\{-I\}\big].$$
For the proof of (\ref{eq4}) we may replace  $U$  by $2V$ therein, since  $ U\{-x- d\xi\}= 0$ for $\xi >-x$---entailing   
\beqn\label{U/V}
U\{x+d\xi\} =2V\{x+d\xi\} \qquad (\xi>-x)
\eeqn
  and  by (\ref{eq0}) the integral over $\xi\leq -x$,   in either case, is $O(1/x)$, negligible in comparison to $1/\ell(x)$.  From Eq(15) of \cite{FO} one derives the identity
    \beqn\label{eq20}
\int_{\R} e^{i\theta \xi} \hat g(\xi)V\{x+d\xi\} =  \int_\R  g(\tht-u)C(u)e^{-ixu} du
\eeqn 
of which we shall give a self-contained proof  (see Lemma \ref{lem3} below).

If one applies  Lemma \ref{lem1} of the preceding section, it is easy to see that
\beqn\label{E6}
\int_{|u|<M/x}   g_a(\tht-u)C(u)e^{-ixu} du \sim \pi \frac{g_a(\tht)}{\ell(x)}
\eeqn
for each $M$ fixed. Hence,  if one can show that as $x\to\infty$
\beqn\label{E7}
\bigg|\int_{|u|\geq M/x}   g_a(\tht-u)C(u)e^{-ixu}du\bigg| < \frac{C}{M\ell(x)}, 
\eeqn
\noindent
then (\ref{eq4})  follows because of (\ref{U/V})  and (\ref{eq20}).  

Below we provide  a   proof of (\ref{E7}) (somewhat simpler than that in \cite{Ec}), which we shall follow  in the proof of Theorem with obvious adaptation and additional arguments. 
 
Put  $\om_x(\tht) =1- E\big[ e^{i\tht X};0\leq X < x]$
and  $r_x(\tht) = E[e^{iX\tht};  X\geq x]$
so that  
$$1-\phi(\tht)= \om_x(\tht) - r_x(\tht).$$
Since $\Re\phi(\tht)< 1$ for $\tht\neq0$ and   $|r_x(\tht)|\leq 1-F(x)=o\big(\ell(x)/x\big)$    it follows from (\ref{phi}) that for each $b>0$ there exist positive constants  $\de$ and $c$ such that as $x\to\infty$
\beqn\label{om/r0}
\left\{\begin{array}{ll}
|\om_x(\tht)| \geq c \quad\quad &\mbox{ for} \;\; \de\leq  |\tht| <b,\\
\om_x(\tht) = -i\tht \ell(1/|\tht|)\{1+o(1)\} \quad\quad &\mbox{uniformly for} \;\; 1/x < |\tht| < \de,
\end{array} \right.
\eeqn
where $o(1)\to 0$ as $\tht\to 0$ under $|\tht|>1/x$ and $|o(1)| <1/2$. Note  that     $r_x(\tht) =o(\om_x(\tht))$ uniformly for $1/x < |\tht| < b$ as $x\to\infty$ and 
\beqn\label{Decom0}
\frac1{1-\phi(\tht)} =   \frac{1}{\om_x(\tht)} +\frac{r_x(\tht)}{\om^2_x(\tht)}\bigg[1+\frac{r_x(\tht)}{1-\phi(\tht)}\bigg].
\eeqn
Write $H(x)= 1-F(x).$
By (\ref{om/r0})  it then follows that 
\beqn\label{est_err0}
\int_{1/x}^b \bigg|\frac{r_x(\tht)}{\om^2_x(\tht)}\bigg|d\tht \leq \int_{1/x}^b \frac{CH(x)  d\tht}{\tht^2\ell^2(1/\tht)} =\int_{1/b}^x \frac{CH(x) dt}{\ell^2(t)} \sim \frac{CxH(x)}{\ell^2(x)} = o\bigg(\frac{1}{\ell(x)}\bigg),
\eeqn
showing that the real part of the second term on the RHS of (\ref{Decom0}) makes only a  negligible contribution to the integral in (\ref{E7}). The  derivative  $\om'_x$  admits the bound $$|\om'_x(\tht)| = \big|E\big[Xe^{i\tht X}; X <x\big]\big| \leq \int_0^x ydF(y) \leq \int_0^x(1-F(y))dy=\ell(x).$$
Hence
  \beqn\label{E5}
 \int_{M/x}^b\bigg| \frac{\om_x'(\tht)}{\om^2_x(\tht)}\bigg| d\tht \leq 
 \int_{M/x}^b \frac{\ell(x)d\tht}{[\tht\ell(1/\tht)]^2} + \frac{\ell(x)b}{c^2}=  \ell(x) \int^{x/M}_{1/b}\frac{dy}{\ell^2(y)} +O(\ell(x)) \leq \frac{Cx}{M\ell(x)}.
\eeqn
Now on noting that $\big[\Re(1/\om_x(\tht))\big]' =- \Re \,\big(\om'_x(\tht)/[\om_x(\tht)]^2\big)$
 integrating by parts  readily leads to  (\ref{E7}) as required.

\v2
{\bf 3.2.} Owing to (\ref{ET}) 
it suffices  for the proof of Corollary \ref{cor2} to show that as $x\to\infty$
\beqn\label{eq4*} 
\frac1{\pi}\int_\R e^{i\tht \xi}\hat g(\xi)V^*\{x+ d\xi\} \sim \frac{g(\tht)}{\ell(x)}
\eeqn
since  $U(x+d\xi) = 2V^*(x+d\xi)$ ($x>1, \xi< -x$).
 In the next subsection we show  the identity
\beqn\label{C} \int_{\R} e^{i\theta \xi} \hat g(\xi)V^*\{x+d\xi\} =  i\int_\R  g(\tht-u) S(u)e^{-ixu} du
\eeqn
valid for every non-arithmetic $F$ on $\mathbb{R}$---this is the exact counterpart of the formula (\ref{eq20}).  
By (\ref{phi}) we have
\beqn\label{A}
S(u) \sim 1/u\ell(1/|u|) \; \;\; (u\to 0).
\eeqn
Since $S$ is an odd function, 
the RHS of (\ref{C}) restricted to $|u|<M/x$, $M>1$ is written as
\beqn\label{g/SS}
 g(\tht) \int_{|u|<M/x} S(u)\sin xu\,du + i\int_{|u|<M/x}\big[ g(\tht- u)-g(\tht)\big]S(u) e^{-ix u}du,
 \eeqn
and  easily   evaluated to be  $[\pi/\ell(x)]\{g(\tht)+o(1) + O(1/M)\}$ in view of $\int_{|u|<M/x} \sin xu \, du/u = \pi +O(1/M)$. 
By (\ref{om/r0}) and  (\ref{E5}) we see that the other integral is bounded in absolute value  by a constant multiple of $M^{-1} /\ell(x)$ in the same way as above. Thus (\ref{eq4*}) follows, for $M$ can be made arbitrarily large, and 
the proof of Corollary \ref{cor2} will be  complete if we can   show (\ref{C}).
\v2
{\bf 3.3.} {\it Proofs of (\ref{eq20}) and (\ref{C}).}  In this subsection $X$ may take negative values.  For application in the next section we formulate the results as  two lemmas. 

\begin{lemma}\label{lem2}  If  $F$ is  non-arithmetic and transient,   then
 (\ref{C}) holds and  
  $\int_0^1|u S(u)|du <\infty$. 
\end{lemma}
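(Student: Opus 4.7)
The plan is to approximate $V^*$ by its partial sums $V^*_N:=\tfrac12\sum_{n=1}^N[F^{n*}-\check F^{n*}]$ (with $\check F^{n*}(A):=F^{n*}(-A)$), each a finite signed measure with Fourier--Stieltjes transform $i\sum_{n=1}^N\Im\phi(u)^n$. A direct Fubini--Parseval computation gives, for every $N$,
\[
\int_\R e^{i\tht\xi}\hat g(\xi)V^*_N\{x+d\xi\} = i\int_\R g(\tht-u)e^{-ixu}\sum_{n=1}^N\Im\phi(u)^n\,du,
\]
which I would pass to the limit $N\to\infty$. For the LHS the classical uniform bound $K:=\sup_y U(y,y+1]<\infty$ (a standard consequence of transience) gives $|V^*_N|(y,y+1]\le K$ uniformly in $N,y$, and combined with the assumption $\hat g(\xi)=O(\xi^{-2})$ dominated convergence yields LHS $\to\int e^{i\tht\xi}\hat g(\xi)V^*\{x+d\xi\}$.

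For the integrability $\int_0^1|uS(u)|\,du<\infty$ I argue directly. From $S^2+C^2=1/|1-\phi|^2$ and $C=(1-\Re\phi)/|1-\phi|^2$ one deduces $|S(u)|^2\le C(u)/(1-\Re\phi(u))$. The elementary estimate $1-\cos t\ge 2t^2/\pi^2$ on $|t|\le\pi$ gives $1-\Re\phi(u)\ge(2u^2/\pi^2)E[X^2;|X|\le\pi/|u|]$, so non-arithmeticity (which forces $E[X^2;|X|\le K]>0$ for $K$ large) provides $c_0>0$ with $1-\Re\phi(u)\ge c_0 u^2$ on some $|u|\le\delta$. Hence $|uS(u)|^2\le C(u)/c_0$ there, and Cauchy--Schwarz combined with the classical equivalence of transience with $\int_0^\delta C(u)\,du<\infty$ yields $\int_0^\delta |uS(u)|\,du<\infty$; the bound on $[\delta,1]$ is immediate from the uniform positivity of $|1-\phi|$ there.

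On the RHS, Abel summation gives $i\sum_{n=1}^N\Im\phi^n = iS - i\Im[\phi^{N+1}/(1-\phi)]$, and the target integral $i\int g(\tht-u)S(u)e^{-ixu}du$ is absolutely convergent by the integrability above: write $g(\tht-u)=g(\tht)+[g(\tht-u)-g(\tht)]$, bound the difference by $O(|u|)$ and pair it with $|S|$ to obtain $|uS|$, while on the $g(\tht)$-piece the oddness of $S$ kills $\int S\cos(xu)du$ and $|S(u)\sin(xu)|\le|x|\cdot|uS(u)|$ controls $\int S\sin(xu)du$. The main obstacle is the vanishing as $N\to\infty$ of the error $\int g(\tht-u)e^{-ixu}\Im[\phi(u)^{N+1}/(1-\phi(u))]du$, since naive dominated convergence fails ($1/|1-\phi|$ is not locally integrable near $0$ in general, as seen already in the finite--mean case). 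I would use the algebraic decomposition $\Im[\phi^{N+1}/(1-\phi)]=\Re\phi^{N+1}\cdot S+\Im\phi^{N+1}\cdot C$ and handle the two pieces separately: the $C$-piece is dominated uniformly by $\|g\|_\infty C$ (integrable by transience) and vanishes via $|\phi|^{N+1}\to 0$ pointwise; the $S$-piece is treated with the same split as for the target integral, exploiting the oscillatory factor $e^{-ixu}$ and $|\sin(xu)|\le|xu|$ to reduce matters to the integrable quantity $|uS|$ before invoking dominated convergence.
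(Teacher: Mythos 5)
Your proposal is correct, but it differs from the paper at every structural joint, so it is a genuinely different route. The paper regularizes by Abel summation, working with $V^*_s=\tfrac12\sum_n s^n[F^{n*}\{dx\}-F^{n*}\{-dx\}]$, $s<1$, whose Fourier--Stieltjes transform is the bounded function $i\,\Im[1-s\phi(u)]^{-1}$, and then passes $s\uparrow1$; you truncate and pass $N\to\infty$, which incurs the boundary term $\Im[\phi^{N+1}/(1-\phi)]$ and requires your extra algebraic split $\Im[\phi^{N+1}/(1-\phi)]=\Re\phi^{N+1}\cdot S+\Im\phi^{N+1}\cdot C$. The paper avoids that decomposition entirely by proving one uniform dominating inequality: using $1-\Re\phi(u)=2E\sin^2(\tfrac12uX)$ together with $(\sin1)\,uE[|X|;|X|\le1]\le E|\sin uX|\le 2[E\sin^2(\tfrac12uX)]^{1/2}$, it shows $|u\,\Im[1-s\phi(u)]^{-1}|\le uE|\sin uX|/|1-\phi(u)|^2\le MC(u)$ uniformly in $0<s<1$, which simultaneously furnishes the dominating function for the $s\uparrow1$ limit and yields $\int_0^1|uS|\,du\le M\int_0^1C\,du<\infty$ at once. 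Your integrability argument, $|uS(u)|^2\le C(u)/c_0$ via $S^2\le S^2+C^2=C/(1-\Re\phi)$ together with the quadratic lower bound $1-\Re\phi(u)\ge c_0u^2$ and Cauchy--Schwarz, is a correct and pleasant alternative; it is pointwise weaker (it bounds $|uS|$ by $\sqrt{C}$ rather than by $C$), but equally sufficient. One small wrinkle you should tighten: your split $g(\tht-u)=g(\tht)+[g(\tht-u)-g(\tht)]$ produces the integral $g(\tht)\int e^{-ixu}S(u)\,du$ over the support of $g(\tht-\cdot)$, an interval $(\tht-a,\tht+a)$ that is not symmetric about the origin, so the ``oddness of $S$ kills $\int S\cos(xu)\,du$'' step requires first peeling off a symmetric subinterval $(-\ep,\ep)$ and using that $S$ is bounded on the rest. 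The paper sidesteps this by extracting directly the odd (in $u$) part of $e^{-ixu}g(\tht-u)$, which is bounded by a constant times $|u|$. With that small repair your argument is complete.
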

\pf
Put   
$$V^*_s\{dx\} =\frac12 \sum_{n=0}^\infty s^n \big [F^{n*}\{dx\} - F^{n*}\{-dx\} \big]\quad\;\;  (0<s<1).$$  
Then  in the identity (\ref{eq1})   $U$ may be replaced by $V^*_s$ simultaneously on both sides of it.   The order of the repeated integral on the RHS of the resulting identity  may be interchanged since  the total variation of $V^*_s$ is finite. 
 Observing 
   \beqn\label{eqIm}
  \int_\R e^{i u y} V^*_s\{dy\}  =\frac{1}{2}\bigg[\frac1{1-s\phi(u)} - \frac1{1-s\phi(-u)}\bigg]
 =i\Im \frac{1}{1-s \phi(u)},
 \eeqn 
we thereby  obtain 
\beqn\label{eq6}
\int_{\R} e^{i\theta \xi} \hat g(\xi)V^*_s\{x+d\xi\} 
=i\int_\R e^{-ixu}g(\tht-u)\,\Im\frac1{1-s \phi(u)}du.
\eeqn
Thus  formally taking limit as $s\uparrow 1$ gives (\ref{C})  which may be written as 
\beqn\label{eq2*}
\int_{\R} e^{i\theta \xi} \hat g(\xi)V^*\{x+d\xi\} =  i\int_\R  e^{-ixu} g(\tht -u)\, \Im\frac{1}{1-\phi(u)} du.
\eeqn 
 The convergence of the left-hand integral in (\ref{eq6}) to that of (\ref{eq2*})  is obvious from  (\ref{eq0}), i.e., $\hat g(x)=O(x^{-2})$.
We must verify  
\beqn\label{eq7}
 \int_\R e^{-ixu}g(\tht -u)\, \Im\frac1{1-s \phi(u)}du \, \longrightarrow \int_\R  g(\tht-u) S(u)e^{-ixu} du \quad \mbox{as \, $s\uparrow1$}.
\eeqn
Observe that  the odd part of 
$$e^{-ixu}g(\tht - u) = \big[g(\tht-u) -g(\tht)\big] e^{-ixu}  + g(\tht)e^{-ixu}$$
 is dominated in absolute value by a constant multiple of $|u|$  (because of the assumption on   $g$) while,
 $\Im [1- s\phi(u)]^{-1}$ being odd, the integral  of the even part multiplied by  $\Im [1-s\phi(u)]^{-1}$ (as well as   $S(u)$) vanishes.  We then infer that 
   (\ref{eq7})  follows  if we can  show that
   \beqn\label{eq8}
   \lim_{s\uparrow 1} \int_{0}^1 \bigg|  \Im\frac1{1-s \phi(u)}  -S(u)\bigg|udu =0.
   \eeqn
 
  As is well known the transience of $F$ implies
\beqn\label{eq10}
\int_0^1 C(u)du<\infty
\eeqn
 (see a remark immediately before  Lemma \ref{lem6}). 
    We claim that  for some $M\geq 1$
\beqn\label{eq9}
\bigg| u\, \Im\frac1{1-s \phi(u)}\bigg| \leq \frac{uE|\sin uX|}{|1-\phi(u)|^2} 
\leq M C(u) \quad (0<u<1, 0<s<1). 
\eeqn
The first inequality is obvious.  For the proof of the second,    
observe   $1-\Re\phi(u) = 2E\sin^2 ({\textstyle \frac12} uX)$ and   $(\sin 1) uE[|X|; |X| \leq 1] \leq E |\sin uX| \leq 2\big[E \sin^2 ({\textstyle \frac12} uX)\big]^{1/2}$. Using these one then  deduces
$$uE|\sin uX| \leq 2\big[ u\big/E|\sin uX| \big] (1-\Re\phi(u)) < \de^{-1} \Re\big[1-\phi(u)\big] \quad (0< u<1)$$
for some $\de>0$,   
verifying  (\ref{eq9}). Now, 
  by  the dominated convergence  (\ref{eq10}) implies  the summability $\int_0^1|uS(u)|du<\infty$, and hence   (\ref{eq8}).  
  The proof of Lemma \ref{lem2} is complete.  \qed 
 \v2
 The next lemma that is used in Section 4.2 (apart from its use in the preceding subsection)  is stated here as an appropriate place, although its proof partly depends on the results of Section 4.1 that follows.
 \begin{lemma}\label{lem3}  Suppose that $F$ is non-arithmetic and satisfies (Ha).  Then  (\ref{eq10}) implies 
 (\ref{eq20}). 
  \end{lemma}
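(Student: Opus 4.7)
The plan is to mimic the Abel-summation argument of Lemma \ref{lem2}, replacing the odd symmetrization $V^*_s$ by the even symmetrization
\[
V_s\{dx\}:=\frac{1}{2}\sum_{n=0}^\infty s^n\bigl[F^{n*}\{dx\}+F^{n*}\{-dx\}\bigr] \qquad (0<s<1),
\]
which is a finite positive (symmetric) measure with characteristic function $\int_\R e^{iuy}V_s\{dy\}=\Re(1-s\phi(u))^{-1}=:C_s(u)$. Repeating the Fubini manipulation leading to (\ref{eq1}) with $V_s$ in place of $U$---permissible because $V_s$ has finite total variation---yields the $s$-analogue of (\ref{eq20}):
\[
\int_\R e^{i\tht\xi}\hat g(\xi)V_s\{x+d\xi\}=\int_\R g(\tht-u)e^{-ixu}C_s(u)du. \qquad(\ast_s)
\]

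I then let $s\uparrow 1$ in $(\ast_s)$. On the left side, $V_s\{A\}\uparrow V\{A\}$ for each Borel $A$, and $|\hat g(\xi)|V\{x+d\xi\}$ is integrable: transience of $F$ gives the standard uniform bound $\sup_\xi V\{[\xi,\xi+1]\}<\infty$, and combined with $\hat g(\xi)=O(\xi^{-2})$ this yields $\int_\R|\hat g|dV<\infty$. Hence dominated convergence drives the left side of $(\ast_s)$ to the left side of (\ref{eq20}), so the right side of $(\ast_s)$ also converges. To identify this limit with $\int_\R g(\tht-u)e^{-ixu}C(u)du$---an absolutely convergent integral, since (\ref{eq10}) controls $C$ at the origin and non-arithmeticity keeps $|1-\phi|$ bounded below elsewhere---I split both sides at $|u|=\de$: on $\{|u|\geq\de\}$, $C_s\to C$ boundedly so dominated convergence gives the desired limit, while the $C$-contribution from $\{|u|<\de\}$ is $O(\int_{|u|<\de}Cdu)$, vanishing as $\de\downarrow 0$ by (\ref{eq10}).

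The main obstacle is to control $\int_{|u|<\de}C_s(u)du$ uniformly in $s\in[\tfrac12,1)$, since $C_s(0)=1/(1-s)\to\infty$. Elementary algebra (using $|1-s\phi|^2\geq\tfrac14|1-\phi|^2$ valid for $s\geq\tfrac12$) gives
\[
C_s(u)\leq 4C(u)+\frac{1-s}{|1-s\phi(u)|^2}.
\]
Under (Ha), the estimates from Section 4.1---$|\Im(1-\phi(u))|\asymp|u|A(1/|u|)$ and $\Re(1-\phi(u))=o(|u|A(1/|u|))$ as $u\to 0$---furnish $|1-s\phi(u)|^2\gtrsim(1-s)^2+u^2A(1/|u|)^2$. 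Splitting $\{|u|<\de\}$ at the threshold $u_*=u_*(s)$ defined by $u_*A(1/u_*)=1-s$: on $\{|u|<u_*\}$ bound the integrand by $1/(1-s)$ (contributing $u_*/(1-s)=1/A(1/u_*)$); on $\{u_*<|u|<\de\}$ bound it by $(1-s)/[u^2A(1/|u|)^2]$ and apply Karamata's theorem $\int_{1/\de}^{1/u_*}A(t)^{-2}dt\sim 1/(u_*A(1/u_*)^2)$ to see that this half also contributes $O(1/A(1/u_*))$. Since $u_*\downarrow 0$ and $A(x)\to\infty$ as $s\uparrow 1$, one obtains $\int_{|u|<\de}(1-s)|1-s\phi|^{-2}du\to 0$ for each fixed $\de$, and consequently $\limsup_{s\uparrow 1}\int_{|u|<\de}C_s(u)du\leq 4\int_{|u|<\de}C(u)du$, which tends to $0$ as $\de\downarrow 0$ by (\ref{eq10}). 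This completes the identification and hence the proof.
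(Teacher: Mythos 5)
Your proof is correct and follows essentially the same route as the paper: Abel summation via the symmetrized measure $V_s$, the identity $(\ast_s)$, then letting $s\uparrow 1$ with the crux being control of $\int_{|u|<\de}C_s(u)\,du$ near the origin, obtained by splitting $C_s(u)=\frac{1-s}{|1-s\phi(u)|^2}+\frac{s(1-\Re\phi(u))}{|1-s\phi(u)|^2}$ and using $|1-s\phi|^2\geq(1-s)^2+s^2|1-\phi|^2$, so that the second piece is $\leq 4C(u)$ and is handled by (\ref{eq10}). The only place you diverge is the treatment of the first piece $\frac{1-s}{|1-s\phi(u)|^2}$: you invoke (\ref{29}) together with Karamata's theorem and a split at $u_*$ defined by $u_*A(1/u_*)=1-s$, whereas the paper uses only the coarser consequence $|1-\phi(u)|/|u|\to\infty$ (which under (Ha) amounts to $A(x)\to\infty$, i.e.\ to (Hb), implied by (Ha)+(\ref{eq10}) via Lemma \ref{lem5}) and the elementary estimate $\int_0^\de\frac{(1-s)\,du}{(1-s)^2+K^2u^2}\leq\frac{\pi}{2K}$, valid for every $K$ once $\de$ is small, which gives smallness uniformly in $s$ without Karamata. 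Both arguments are sound; the paper's is a bit more economical, while yours makes the role of the regular variation of $A$ explicit. One small point worth stating in your write-up: the step $A(1/u_*)\to\infty$ as $s\uparrow1$ uses $A(x)\to\infty$, which is not a consequence of (Ha) alone but of (Hb), so you should note (as you implicitly do when you invoke transience for the left-hand side) that (Ha)+(\ref{eq10}) yields (Hb).
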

\pf  The result can be directly derived from a somewhat more general result  \cite[Eq(15)]{FO}. The proof presented below, an adaptation of that in \cite{FO}, is simpler because of the extra assumption of (Ha).  

Analogously to the preceding proof, putting 
$   V_s\{dx\} = \frac12\sum_{n=0}^\infty  s^n[F^{n*}\{dx\} +  F^{n*}\{-dx\}]$ ($0<s<1$), 
 one sees that $\int_\R e^{i u y} V_s\{dy\}  
 =\Re \big({1-s \phi(u)}\big)$
and
  \beqn\label{eq60}
\int_{\R} e^{i\theta \xi} \hat g(\xi)V_s\{x+d\xi\} =\int_\R e^{-ixu}g(\tht-u)\,\Re\frac1{1-s \phi(u)}\,du
\eeqn
 similarly  to (\ref{eqIm}) and (\ref{eq6}), respectively. 
We let   $s\uparrow 1$ in (\ref{eq60}) to  see (\ref{eq20}) which we may write as
\v2
{\rm Eq}(\ref{eq20}):  \qquad  
${\displaystyle \int_{\R} e^{i\theta \xi} \hat g(\xi)V\{x+d\xi\} =  \int_\R  e^{-ixu}  g(\tht-u) \Re\frac1{1-\phi(u)}du.}$
\v2\noindent
 The left-hand integral in (\ref{eq60})   is easily disposed of as before. For the right-hand one,   verification may be required.  By the inequality
 $|1-s\phi|^2 \geq (1-s)^2 +s^2|1-\phi|^2$,  we have for $\de>0$ 
\beqn\label{Olnstn}
\int_{-\de}^\de \Re\frac1{1-s \phi(u)}du \leq \int_{0}^\de \frac{2(1-s)}{(1-s)^2+ s^2|1- \phi(u)|^2} du+  \int_{0}^\de \frac{2s\big(1-\Re\phi(u)\big)du}{(1-s)^2+ s^2|1- \phi(u)|^2}.
\eeqn
 By (\ref{eq10}) the second integral on the RHS converges  to zero as $\de\to0$ uniformly for $s$. The same convergence also  is true  of the first integral  under   $\lim |1-\phi(u)|/|u|=\infty$ which (Ha) together with (\ref{eq10}) entails
  (cf. Lemmas \ref{lem4} and \ref{lem5} in Section 4.1).  
 Since $g$ vanishes outside a compact set,  these together show Eq(\ref{eq20}). \qed
 \v2
By what is observed right after (\ref{Olnstn}) the integral on its LHS is bounded as $s\uparrow1$, which in particular shows that $F$ is transient (cf. e.g., \cite[Section XVIII.7]{F}).

\section{Proof of Theorem.}

In this section  $X$  assumes both positive and  negative values and  $F$ is supposed to be non-arithmetic.   
As before,  let   $C$  and $S$ be the real and imaginary parts of $1/(1-\phi)$  so that 
\beqn\label{C+S}
1/[1-\phi(\tht)] = C(\tht)+ iS(\tht)\quad\; \mbox{for \; $\tht\in \R\setminus\{0\}$}.
\eeqn
 \v2
  
 {\bf 4.1.} {\it Preliminary lemmas.} In this subsection we shall suppose   (Ha) to  hold. 
\begin{lemma}\label{lem4}  If (Ha) holds, then as $\tht\to 0$
\beqn\label{29}
1-\phi(\tht) =-i\tht A(1/|\tht|)\{1+o(1)\}.
\eeqn
\end{lemma}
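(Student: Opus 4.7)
The plan is to work directly from
\[
1-\phi(\theta) \;=\; \int_\R(1-\cos\theta x)\,dF(x) \;-\; i\int_\R\sin\theta x\,dF(x)
\]
and analyse the imaginary and real parts separately for $\theta>0$, the case $\theta<0$ following from $\phi(-\theta)=\overline{\phi(\theta)}$. The targets are $\Im[1-\phi(\theta)] = -\theta A(1/\theta)(1+o(1))$ and $\Re[1-\phi(\theta)] = o(\theta A(1/\theta))$.

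For the imaginary part I would truncate at $|x|\le 1/\theta$ and use $\sin\theta x = \theta x + O((\theta x)^3)$ to write
\[
\int_\R\sin\theta x\,dF(x) \;=\; \theta\, E[X;\,|X|\le 1/\theta] \;+\; R_1 \;+\; R_2,
\]
with $|R_1|\le H(1/\theta)$ and $|R_2|\le (\theta^3/6)\,E[|X|^3;|X|\le 1/\theta]$. A one-dimensional integration by parts on each half-line gives the key identity
\[
E[X;\,|X|\le T] \;=\; A(T) \,-\, T K(T),
\]
which, combined with the consequence $|TK(T)|\le TH(T)=o(A(T))$ of (Ha), yields $E[X;|X|\le T]=A(T)(1+o(1))$. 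The same (Ha) bound forces $|R_1|\le H(1/\theta)=o(\theta A(1/\theta))$. For $R_2$ I would use $E[|X|^3;|X|\le T]\le 3\int_0^T x^2H(x)\,dx$; since (Ha) gives $xH(x)\le\varepsilon A(x)$ eventually for any $\varepsilon>0$, and since $A$ is slowly varying (Remark~\ref{rem1}) so that Karamata's integration theorem yields $\int_0^T xA(x)\,dx\sim T^2A(T)/2$, one obtains $\int_0^T x^2H(x)\,dx = o(T^2A(T))$, whence $|R_2| = (\theta^3/6)\cdot o(\theta^{-2}A(1/\theta)) = o(\theta A(1/\theta))$.

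For the real part, $|1-\cos\theta x|\le\min(2,(\theta x)^2/2)$ gives $|\Re[1-\phi(\theta)]|\le(\theta^2/2)E[X^2;|X|\le 1/\theta] + 2H(1/\theta)$. An analogous Karamata argument, based on $E[X^2;|X|\le T]\le 2\int_0^T xH(x)\,dx$ together with $\int_0^T A(x)\,dx\sim TA(T)$, produces the bound $o(\theta A(1/\theta))$. Combining, $1-\phi(\theta) = -i\theta A(1/\theta)(1+o(1))$ for $\theta>0$, and conjugation delivers $\theta<0$. No step is deep; the only point requiring care is to avoid a naive integration by parts in $\int_0^\infty K(x)\cos\theta x\,dx$, which has divergent boundary terms at infinity since $A(x)\cos\theta x$ has no limit---the truncation-plus-Taylor route sidesteps this by letting the tails of $F$, rather than $K$ alone, carry the decay.
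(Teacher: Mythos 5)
Your proof is correct but takes a genuinely different decomposition from the paper's. The paper first integrates by parts over the whole line to write $\Im[1-\phi(\theta)]=-\theta\int_0^{\to\infty}K(x)\cos\theta x\,dx$, then splits this at $1/\theta$: the near-origin piece gives $A(1/\theta)$ up to a correction $\int_0^{1/\theta}K(x)(1-\cos\theta x)\,dx=O\big(\theta\int_0^{1/\theta}xH(x)\,dx\big)$, while the oscillatory tail $\int_{1/\theta}^\infty K(x)\cos\theta x\,dx$ is controlled by the second mean value theorem (exploiting monotonicity of $1-F(\cdot)$ and $F(-\cdot)$), giving $O(H(1/\theta)/\theta)=o(A(1/\theta))$. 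You instead truncate $dF$ at $|x|\le1/\theta$ first and Taylor-expand $\sin$; the leading term then comes from the same single integration by parts, packaged as $E[X;\,|X|\le T]=A(T)-TK(T)$, while the tail is bounded crudely by its total mass $H(1/\theta)$---adequate here because (Ha) gives $H(1/\theta)=o(\theta A(1/\theta))$---so you avoid the Dirichlet-type argument entirely, and in that sense your tail estimate is more elementary. The trade-off is that your cubic remainder $R_2$ needs the extra Karamata estimate $\int_0^T x^2H(x)\,dx=o(T^2A(T))$, whereas the paper needs only $\int_0^T xH(x)\,dx=o(TA(T))$; both rely on $A$ being s.v.\ (Remark~\ref{rem1}). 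Your closing caution about divergent boundary terms in a naive integration by parts of $\int_0^\infty K(x)\cos\theta x\,dx$ is well taken, but the paper's proof never attempts that step---it sidesteps the issue via the Dirichlet-type tail bound rather than via truncation.
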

 This lemma is contained in  the criteria for relative stability of $F$ obtained by Maller \cite[Theorem 1]{M79}.
    Here we present  a  proof, it being  simple enough---Maller \cite{M79} derives (\ref{29})  directly from the relative stability of $F$, but its derivation from (Ha) is simpler---and the same arguments being employed  implicitly in the sequel.  
As mentioned in Section 1    $A$ is s.v.\;under (Ha).
By skew symmetry we may consider only  the case $\tht>0$. Since $\lim_{y\to\infty} \int_0^{y}K(x)\cos \theta x\,dx$ exists, we have for $\tht>0$
$$ \Im [1-\phi(\tht)] = -\int_R \sin \tht x\, dF(x)= -\tht \int_0^{\to \infty} K(x) \cos \tht x dx,$$  
and observe  that  on the one hand by using the monotonicity of $F(-x)$ and $1-F(x)$,
\[
\bigg|\int_{1/\tht}^{\to \infty}  K(x) \cos \tht x \,dx\bigg| 
\leq \frac{\pi H(1/\tht)}{\tht}  =o(A(1/\tht)),
\]
and on the other hand 
 $|\int_0^{1/\tht} K(x)(1- \cos \tht x) dx| \leq \tht \int_0^{1/\tht} xH(x)dx =o(A(1/\tht))$, to obtain 
\begin{eqnarray}\label{<}
\int_0^{\to \infty}  K(x) \cos \tht x \,dx    &=& A(1/\theta) -\int_0^{1/\tht}K(x)(1-\cos \tht x)dx + \int_{1/\tht}^{\to \infty}  K(x)\cos \tht x \,dx \nonumber\\
&=&A(1/\tht) \{1+o(1)\}.
\end{eqnarray}
This concludes $\Im [1-\phi(\tht)] \sim -\tht A(1/\tht)$ ($\tht\to0$).  Similarly, 
we deduce   $\big|\Re [1-\phi(\tht)] \big|=\big|\tht \int_0^\infty H(x)\sin \tht x\, dx\big| = o(\tht A(1/\tht))$. Thus  (\ref{29}) has been verified.  

\v2
We are going to prove several  lemmas concerning $1/[1-\phi(\theta)]$.  Instead of (\ref{r.v.}) we have
\beqn
\label{C/A}
\int_{0} ^x t^2d(-H(t)) \leq 2\int_0^x tH(t)dt =  o\big(xA(x)\big).
\eeqn 
Note that $1/A(x) \leq \int_x^\infty H(t)dt/ A^2(t)\leq \infty$ because of  (\ref{1/A}) and that   $t/A^2(t)$ is increasing in a neighborhood of $\infty$. Then one 
sees that  
\beqn\label{dF/A}
\int_{x}^\infty \frac{td(-H(t))}{A^2(t)}  
=   \int_x^\infty\frac{H(t)}{A^2(t)}dt\{1+o(1)\}, 
\eeqn  
where the two integrals   are simultaneously  finite or infinite.     Recall  that  (Hb)  is the integrability condition 
$ \int_{x_0}^\infty H(t) dt/A^2(t)<\infty$. 

\begin{lemma}\label{lem5} Suppose  (Ha)  holds.  Then  (Hb) is equivalent to $\int_0^1 C(\tht)d\tht<\infty$ and implies
\beqn\label{C/H/A}
\int_0^{\tht} C(u)du \sim \frac{\pi}{2m(1/\tht)} = \frac{\pi}{2}\int_{1/\tht}^\infty \frac{H(x)}{A^2(x)}dx  \qquad (\tht \downarrow 0).
\eeqn 
\end{lemma}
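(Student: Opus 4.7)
The plan is to adapt the argument of Lemma 1 to the general two-sided setting, using Lemma 4 to replace $1-\phi(u)\sim-iu\ell(1/|u|)$ by $1-\phi(u)\sim-iuA(1/|u|)$, and invoking (\ref{dF/A}) to pass between integrals against $td(-H(t))/A^2(t)$ and $H(t)dt/A^2(t)$. Since $C(u)=(1-\Re\phi(u))/|1-\phi(u)|^2\geq 0$ and $1-\Re\phi(u)=\int_\R(1-\cos ux)\,dF(x)$, Tonelli gives, with $J(\tht):=\int_0^\tht C(u)\,du$,
\[
J(\tht)=\int_\R K(x,\tht)\,dF(x),\qquad K(x,\tht):=\int_0^\tht\frac{1-\cos ux}{|1-\phi(u)|^2}\,du.
\]

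I would then split the $x$-integral at $|x|=M/\tht$ with $M>1$ a free parameter. For the far range $|x|>M/\tht$ I decompose $K(x,\tht)$ into three pieces at $u=1/(M|x|)$ and $u=M/|x|$. The two outer pieces are $O(|x|/(MA^2(|x|)))$, while substituting $v=ux$ in the middle one, using Lemma 4 and the slow variation of $A$, yields
\[
\int_{1/(M|x|)}^{M/|x|}\frac{1-\cos ux}{|1-\phi(u)|^2}\,du\sim\frac{|x|}{A^2(|x|)}\int_{1/M}^M\frac{1-\cos v}{v^2}\,dv=\frac{|x|}{A^2(|x|)}\bigl\{\tfrac{\pi}{2}+O(1/M)\bigr\}.
\]
Integrating over $|x|>M/\tht$ and applying (\ref{dF/A}) together with slow variation of $m$ (which holds under (Ha)+(Hb), since $\epsilon_m(t)=tm(t)H(t)/A^2(t)\leq tH(t)/A(t)\to 0$) yields the main term $\{\pi/2+O(1/M)\}/m(1/\tht)$ up to a factor $1+o(1)$. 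For the near range $|x|\leq M/\tht$, the bound $1-\cos ux\leq u^2x^2/2$ combined with Lemma 4 and Karamata gives $K(x,\tht)\leq\{1+o(1)\}x^2\tht/(2A^2(1/\tht))$, and invoking (\ref{C/A}) produces
\[
\int_{|x|\leq M/\tht}K(x,\tht)\,dF(x)\leq\frac{\tht}{2A^2(1/\tht)}\cdot o\!\left(\tfrac{M}{\tht}A(1/\tht)\right)=o\!\left(\tfrac{M}{A(1/\tht)}\right).
\]
Because $m\leq A$ by (\ref{m/A/ell}), multiplying this by $m(1/\tht)$ gives $o(M\cdot m/A)=o(M)\to 0$ as $\tht\to 0$ for each fixed $M$. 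Letting $\tht\to 0$ first and then $M\to\infty$ yields $J(\tht)\sim\pi/(2m(1/\tht))$.

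For the equivalence, the direction (Hb)$\Rightarrow\int_0^1 C<\infty$ is immediate, since (Hb) forces $1/m(x)\to 0$ and hence $J(\tht)\to 0$ as $\tht\downarrow 0$. For the converse, retaining only the middle piece of $K(x,1)$ provides the uniform lower bound $K(x,1)\geq c_M|x|/A^2(|x|)$ for $|x|$ sufficiently large and $M$ fixed; Tonelli and (\ref{dF/A}) then give $\int_0^1 C(u)\,du\geq c'_M\int_M^\infty H(t)\,dt/A^2(t)$, so finiteness of the left side entails (Hb). The main obstacle is that the Erickson-style control of the near range seemingly produces only an error of order $1/A(1/\tht)$, coarser than the target $1/m(1/\tht)$ when $m\ll A$; this is resolved by the trivial inequality $m\leq A$, which immediately upgrades $o(1/A(1/\tht))$ to $o(1/m(1/\tht))$ for non-negative quantities and lets the near-range contribution be absorbed into the main asymptotic.
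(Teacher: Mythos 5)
Your proof is correct and follows essentially the same route as the paper's: both adapt Erickson's two-range split at $|x|=M/\theta$ from Lemma \ref{lem1}, use Lemma \ref{lem4} and (\ref{C/A}) to control the near range, the three-way subdivision of the $u$-integral to extract the $\pi/2$ constant in the far range, and (\ref{dF/A}) to pass to $\int H/A^2$. The paper absorbs the $m\leq A$ observation silently into the statement $J_2(\theta)=o(1/A(1/\theta))$ followed by the conclusion, whereas you spell it out explicitly as the step upgrading $o(1/A)$ to $o(1/m)$; this is the same argument, just made more transparent.
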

\pf  We compute $J(\tht):=\int_0^\tht C(u)du$  by following the proof of Lemma \ref{lem1}. Given $M>1$ and  $0<\tht <1/x_0$,
 define  $J_1$ and $J_2$ as in its proof, but with  $dF(x)$ replaced by $d(-H(x))$ so that $J(\tht)=J_1(\tht)+J_2(\tht)$. Then on using (\ref{29}) and (\ref{C/A})
\beqn\label{J2}
J_2(\tht) \leq  \int_{0}^{\tht} \frac{dt}{t^2A^2(1/t) \{1+o(1)\}}\int_0^{M/\tht} \frac{(tx)^2}2 d(-H(x)) =o\bigg(\frac1{A(1/\tht)} \bigg),
\eeqn
while by (\ref{29}) again   one sees in the same way as before  that uniformly for  $x\geq M/\tht$ 
\beq
 \int_{0}^\tht  \frac{1-\cos xt}{|1-\phi(t)|^2}dt  &=&  x\int^{M}_{1/M} \frac{(1-\cos u)du}{u^2A^2(x/u)\{1+o(1)\}} + \bigg(\int^{1/Mx}_0+ \int_{M/x}^\tht \bigg)\frac{(1-\cos xt)dt}{t^2A^2(1/t)\{1+o(1)\}} \\
 &=&
 \frac{x}{A^2(x)}\bigg\{\frac\pi 2 +o(1) + O\Big(\frac1{M}\Big) \bigg\}
 \eeq
and  hence
 \beqn\label{J1}
   J_1(\tht) =  \int_{M/\tht}^\infty  \frac{xd(-H(x)))}{A^2(x)}\bigg\{\frac\pi 2+ o(1) + O\Big(\frac1{M}\Big)\bigg\}.
  \eeqn
Finally we combine (\ref{J2}), (\ref{J1})  
and  (\ref{dF/A})  to conclude that  $J(\tht)<\infty$ if and only if (Hb) holds and   either one implies  (\ref{C/H/A}).
\qed

\v2
The summability, $\int_0^1 C(t)dt<\infty$, is necessary and sufficient for  the transience of $F$. [The necessity part follows from the well-known criterion for  transience as given in \cite{F}, whereas we have already observed that the sufficiency part follows from the same criterion under (Ha) just after  (\ref{Olnstn}). The sufficiency part, true in general, is obtained by Ornstein \cite{O}  (cf. also \cite{St}), of which, however, the proof is quite involved and not found in standard textbooks of probability theory.]     By Lemma \ref{lem2} the above summability of $C(u)$ implies  $\int_0^1 |\tht S(\tht)|d\tht<\infty$. For clarity and convenience of later citation,  we state   these consequences of Lemma \ref{lem5} as a lemma. 
\begin{lemma}\label{lem6} Suppose  $(Ha)$ to hold.  Then  (Hb) is a necessary and sufficient condition for  the transience of $F$, and implies 
\beqn\label{tS}
\int_0^1 \big[ C(\tht) + |\tht S(\tht)|\big] d\tht <\infty. 
\eeqn
\end{lemma}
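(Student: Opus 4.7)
\textbf{Proof proposal for Lemma \ref{lem6}.} The statement is essentially a repackaging of Lemmas \ref{lem5} and \ref{lem2} combined with the classical Chung--Fuchs--Ornstein transience criterion, so the plan is to assemble these ingredients rather than prove anything new.

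First, for the equivalence of (Hb) and transience under (Ha), the plan is the following. The general criterion (quoted in the paragraph immediately preceding the lemma, and attributed to Ornstein \cite{O} for the nontrivial direction) states that $F$ is transient if and only if $\int_0^1 C(\theta)d\theta<\infty$; this holds with no extra hypotheses on $F$. Under the standing assumption (Ha), Lemma \ref{lem5} asserts that (Hb) is equivalent to $\int_0^1 C(\theta)d\theta<\infty$. Chaining the two equivalences yields ``(Hb) $\Longleftrightarrow$ $F$ transient'' under (Ha).

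Next, for (\ref{tS}), the summability of $C$ on $(0,1)$ is immediate from Lemma \ref{lem5}, so it remains to bound $\int_0^1|\theta S(\theta)|d\theta$. Since $F$ is non-arithmetic (as standing in Section 4) and transient, Lemma \ref{lem2} applies and gives exactly $\int_0^1 |uS(u)|du<\infty$. Adding the two summabilities yields (\ref{tS}).

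I do not anticipate any real obstacle: the entire argument is a two-line combination, with Lemma \ref{lem5} providing one half, Lemma \ref{lem2} the other, and the Ornstein criterion serving as the bridge that converts the analytic condition $\int_0^1 C<\infty$ into the probabilistic statement of transience. The only point worth a line of prose in the write-up is pointing the reader to the Ornstein criterion (since its sufficiency direction is nontrivial but has already been invoked in the remark preceding the lemma), and noting that the non-arithmetic hypothesis needed to invoke Lemma \ref{lem2} is in force throughout Section 4.
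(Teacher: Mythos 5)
Your proposal matches the paper's own reasoning essentially verbatim: the paper states Lemma \ref{lem6} as a summary of the paragraph immediately preceding it, which combines Lemma \ref{lem5} (equivalence of (Hb) and $\int_0^1 C<\infty$ under (Ha)), the Ornstein/Feller transience criterion ($\int_0^1 C<\infty$ iff $F$ transient), and Lemma \ref{lem2} (transience plus non-arithmetic gives $\int_0^1|uS(u)|du<\infty$). Your remark that the non-arithmetic hypothesis is in force throughout Section 4 is the right way to justify invoking Lemma \ref{lem2}.
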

\v2

We need to get the  relation corresponding to (\ref{E6}) for the present situation 
where $\ell$ must be replaced by $m$. To this end the  inequality (\ref{E5})---with $\om_x(\theta)$ given below---is inadequate 
and we give an appropriate inequality in  Lemma  \ref{lem8} shortly. In preparation for the proof of Lemma  \ref{lem8}  we state some
facts  analogous  to  the last part of Section 3.1.
 
 We make the decomposition
 $1-\phi(\tht)= \om_x(\tht) - r_x(\tht)$, where
\[
\om_x(\tht) = 1- E\big[ e^{i\tht X}: |X|<x  \big], \quad r_x(\tht)= E\big[ e^{i\tht X}: |X|\geq x  \big].  
\]
Since $\Re\phi(\tht) < 1$ for $\tht\neq0$,  $|r_x(\tht)|\leq H(x)=o(A(x)/x)$ ($x\to\infty$), and 
$1-\phi(\tht)\sim -i\tht A(1/|\tht|)$ ($\tht\to 0$), it follows that   for each $b>0$,  $ r_x(\tht) = o(\om_x(\tht))$ as $x\to\infty$ uniformly  for $1/x<|\tht|<b$
and  there exists  $c>0$ and $\de>0$ such that
\beqn\label{om/r}
\begin{array}{ll}
|\om_x(\tht)|\geq c \qquad  &\mbox{for} \;\; \de \leq  |\tht|< b,\\
\om_x(\tht) = -i\tht A(1/|\tht|)\{1+o(1)\}, \quad \quad &\mbox{uniformly for} \;\; 1/x < |\tht| < \de,
\end{array}
\eeqn
where 
 $o(1)\to 0$ as $\tht\to 0$ and $|o(1)|<1/2$ (as in  (\ref{om/r0})). 
By (\ref{om/r}) 
we infer as in (\ref{est_err0}) that 
\beqn\label{est_err}
\int_{1/x}^b \bigg|\frac{r_x(\tht)}{\om^2_x(\tht)}\bigg|d\tht
=  o\bigg(\frac{1}{A(x)}\bigg),
\eeqn
so that if $f(\tht)$ is piecewise smooth, then  as $x\to\infty$
\beqn\label{Decom}
\int_{M/|x| <|\theta <b} \frac{f(\tht)e^{ix\tht}}{1-\phi(\tht)}   d\tht 
= \int_{M/|x| <|\theta <b} \frac{f(\tht)e^{ix\tht}}{\om_x(\tht)}   d\tht +o\bigg(\frac1{A(x)}\bigg).
\eeqn

We have the bound $|\om'_x(\tht)|\leq \ell(x)$ as before,  which however yields only the bound  (\ref{E5}) with  $C\ell(x)/MA^2(x)$ in place of $C/M\ell(x)$ in its right-most member, a bound insufficient for the proof of Theorem. This issue will be  cleared up  in Lemma \ref{lem8} by using the following

\begin{lemma}\label{lem7}   If  (Ha) holds, then 
\[
|  \om_x'(\tht)| =  A(x)\big\{1+ o\big(\sqrt{|\tht|x} \,\big)\big\} \quad \mbox{as\,
$x\, \to \, \infty$\, uniformly for\, $ |\tht| >1/x$}.
\] 
\end{lemma}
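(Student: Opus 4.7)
The plan is to compute $\omega_x'(\theta)=-iE[Xe^{i\theta X};|X|<x]$ explicitly and show that it is close (in modulus) to $A(x)$. First I would compute $E[X;|X|<x]$ by splitting into positive and negative parts and integrating by parts on each:
\[
E[X;|X|<x]=-x(1-F(x))+\int_0^x(1-F(y))dy+xF(-x-)-\int_0^xF(-y-)dy=A(x)-xK(x).
\]
Writing $\omega_x'(\theta)=-iE[X;|X|<x]-iE[X(e^{i\theta X}-1);|X|<x]$, this gives the identity
\[
\omega_x'(\theta)+iA(x)=ixK(x)-iR(x,\theta),\qquad R(x,\theta):=E[X(e^{i\theta X}-1);|X|<x].
\]
Under (Ha), $xK(x)\leq xH(x)=o(A(x))$, so the first error term is already negligible. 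The whole task thus reduces to bounding $|R(x,\theta)|$.

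For the remainder, I would use the elementary inequality $|e^{it}-1|\leq 2\sqrt{|t|}$ (which follows from $|e^{it}-1|\leq\min(|t|,2)$ by splitting at $|t|=1$) to obtain
\[
|R(x,\theta)|\leq 2\sqrt{|\theta|}\,E\bigl[|X|^{3/2};|X|<x\bigr].
\]
Then $E[|X|^{3/2};|X|<x]=-\int_0^x y^{3/2}dH(y)=-x^{3/2}H(x)+\tfrac{3}{2}\int_0^x y^{1/2}H(y)dy$ by integration by parts. Since $xH(x)=o(A(x))$ by (Ha), the boundary term is $o(\sqrt{x}A(x))$; and for the integral, given $\epsilon>0$ we have $H(y)<\epsilon A(y)/y$ for all sufficiently large $y$, so $y^{1/2}H(y)<\epsilon A(y)y^{-1/2}$, and Karamata's theorem applied to the s.v.\ function $A$ (cf.\ Remark \ref{rem1}) yields $\int_0^x y^{-1/2}A(y)dy\sim 2\sqrt{x}A(x)$. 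Combining these gives $E[|X|^{3/2};|X|<x]=o(\sqrt{x}A(x))$, hence $|R(x,\theta)|=o(\sqrt{|\theta|x}\,A(x))$ uniformly in $\theta$.

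Putting everything together, for $|\theta|>1/x$ we have $\sqrt{|\theta|x}\geq 1$, so $xK(x)=o(A(x))\leq o(A(x)\sqrt{|\theta|x})$, and the two error estimates combine to give
\[
|\omega_x'(\theta)+iA(x)|=o\bigl(A(x)\sqrt{|\theta|x}\bigr)\qquad \text{uniformly in }|\theta|>1/x.
\]
The triangle inequality then gives $\bigl||\omega_x'(\theta)|-A(x)\bigr|=o(A(x)\sqrt{|\theta|x})$, i.e.\ the claimed equality. The main obstacle, such as it is, lies in justifying that $E[|X|^{3/2};|X|<x]=o(\sqrt{x}A(x))$ strictly from (Ha), via the Karamata-type estimate for integrals against the slowly varying $A$; the rest is essentially algebraic. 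One could sharpen the exponent by using $|e^{it}-1|\leq 2^{1-\alpha}|t|^\alpha$ for other $\alpha\in(0,1)$, but $\alpha=\tfrac12$ gives exactly the $\sqrt{|\theta|x}$ factor claimed in the lemma.
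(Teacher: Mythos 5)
Your proof is correct and takes essentially the same route as the paper: split $\om_x'(\tht) = -i\int_{-x}^x y\,dF(y) + i\int_{-x}^x y(1-e^{i\tht y})\,dF(y)$, identify the first integral with $A(x)\{1+o(1)\}$ (the paper states this directly; your integration by parts giving $A(x)-xK(x)$ together with $xK(x)\leq xH(x)=o(A(x))$ is exactly the computation behind it), and bound the second via $|e^{it}-1|\leq 2\sqrt{|t|}$ followed by integration by parts and the Karamata-type estimate $\int_0^x y^{-1/2}A(y)\,dy\sim 2\sqrt{x}A(x)$, which the paper compresses into the one-line remark $\sqrt y\,H(y)=o(A(y)/\sqrt y)$. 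You merely spell out a few steps the paper leaves tacit.
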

\begin{proof} 
Performing  differentiation  we have
\[
\om_x'(\tht) 
= -  i\int_{-x}^x ye^{i \tht y} \, dF(y) = - i\int_{-x}^x ydF(y) + i \int_{-x}^x y(1 - e^{i \tht y}) dF(y).
\]
The first integral on the RHS is asymptotically equivalent to  $A(x)$ under (Ha). 
As for the second one, on noting  $|e^{i y\tht}-1| \leq 2\sqrt{|\tht|y}$ ($y>0$),   observe
\beq
\bigg|\int_{-x}^x y (1-e^{i \tht y}) dF(y)\bigg| \leq 2 \sqrt {|\tht|} \int_{-x}^x |y|^{3/2} \, dF(y)
 &\leq& 3 \sqrt {|\tht|} \int_{0}^x \sqrt y \, H(y) dy\\
 & =& \sqrt{|\tht|x}\times o(A(x)),
\eeq
where  the equality is obtained by  $\sqrt yH(y)=o\big(A(y)/\sqrt y\big)$.
The proof is finished.
\end{proof}

\v2
\begin{lemma}\label{lem8} \, Suppose  that  (Ha) is satisfied. Then for any positive constant $b$ and any function $f(\theta)$  that is piecewise  continuously differentiable (including boundaries),   there exists a constant $C$ such that for any $M>2\pi$  and $|x|>x_0$,
\beqn\label{Lem7}
\bigg|\int_{M/|x| <|\theta <b} \frac{f(\tht)e^{ix\tht}}{1-\phi(\tht)}   d\tht\bigg| \leq \frac{C}{M A(|x|)}.
\eeqn
\end{lemma}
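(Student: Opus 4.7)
The plan is to mimic the proof of (\ref{E7}) in Section 3.1, with the crude bound $|\om_x'(\tht)|\leq \ell(x)$ used there replaced by the finer estimate of Lemma \ref{lem7}. First, (\ref{Decom}) reduces the assertion to bounding
\[
J(x,M) := \int_{M/|x|<|\tht|<b}\frac{f(\tht)e^{ix\tht}}{\om_x(\tht)}\,d\tht,
\]
since the $o(1/A(|x|))$ error from (\ref{Decom}) is, for each fixed $M$, absorbed into $C/(MA(|x|))$ once $|x|$ is large enough. For $|x|$ large (depending on $M$) one has $M/|x|<\de$ (with $\de$ as in (\ref{om/r})), so the range $(M/|x|,b)$ splits naturally at $\de$; by the symmetry $\om_x(-\tht)=\overline{\om_x(\tht)}$ it suffices to treat $\tht>0$.

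On each subinterval I integrate by parts. On $(\de,b)$, $|\om_x|\geq c$ and Lemma \ref{lem7} gives $|\om_x'|\leq CA(|x|)\sqrt{b|x|}$, so the contribution after IBP is $O(A(|x|)/\sqrt{|x|})$, which is $o(1/(MA(|x|)))$ for fixed $M$ by slow variation of $A$. On $(M/|x|,\de)$, the IBP boundary at $\tht=M/|x|$ produces the principal term: by (\ref{om/r}) and slow variation, $|\om_x(M/|x|)|\sim (M/|x|)A(|x|)$, so that boundary contributes $O(1/(MA(|x|)))$. The $f'/\om_x$ part of the residual derivative integral, after the substitution $u=1/\tht$, is a constant multiple of $|x|^{-1}\int_{1/\de}^{|x|/M}du/(uA(u))$, which is $o(1/(MA(|x|)))$ because $\int_1^y du/(uA(u))=o(y/A(y))$ for slowly varying $A\to\infty$.

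It remains to bound $|x|^{-1}\int_{M/|x|}^\de |f\om_x'/\om_x^2|\,d\tht$. I split this at $\tht_\ast=M^2/|x|$ and apply Lemma \ref{lem7} with $\ep=1/M$ (legitimate once $|x|$ is sufficiently large depending on $M$). On $(M/|x|,M^2/|x|)$, Lemma \ref{lem7} gives $|\om_x'|\leq 2A(|x|)$, and combining with $|\om_x|\geq c|\tht|A(1/|\tht|)\sim c|\tht|A(|x|)$ (slow variation for $M$ fixed) yields the bound $C|x|(1/M-1/M^2)/A(|x|)\leq C|x|/(MA(|x|))$. On $(M^2/|x|,\de)$, Lemma \ref{lem7} gives $|\om_x'|\leq 2\ep A(|x|)\sqrt{|\tht||x|}$; after $u=1/\tht$, Karamata's theorem applied to the slowly varying integrand $1/(\sqrt{u}A^2(u))$ produces $\int_{1/\de}^{|x|/M^2}du/(\sqrt{u}A^2(u))\sim 2\sqrt{|x|}/(MA^2(|x|))$, and the sub-integral is thus bounded by $C\ep|x|/(MA(|x|))=C|x|/(M^2A(|x|))$. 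Dividing both by $|x|$ gives $O(1/(MA(|x|)))$. The main difficulty is the coupled choice $T=M^2$ together with $\ep=1/M$: the factor $\ep$ in the second sub-range cancels the $\sqrt{T}$ surviving from Karamata, while the geometric gap $1/M-1/M^2$ in the first sub-range supplies the requisite $1/M$; careful constant tracking shows that the resulting $C$ does not depend on $M$.
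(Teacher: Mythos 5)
The proposal is correct and follows the paper's own route essentially verbatim: reduce via (\ref{Decom}), integrate by parts, then bound the residual integral by combining Lemma \ref{lem7} with Karamata's theorem after the substitution $u = 1/\tht$, with the boundary term handled through $x|\om_x(M/x)| \sim MA(x)$. Your extra split at $\tht_\ast = M^2/|x|$ coupled with the choice $\ep = 1/M$, and the explicit treatment of the $f'/\om_x$ piece, are careful elaborations of estimates that the paper carries out in a single stroke (the bound $\frac{A(x)}{x}\int_{1/\de}^{x/M}\frac{1+o(\sqrt{x/t})}{A^2(t)}dt \sim \frac{1}{MA(x)}$), but they do not change the substance of the argument.
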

\begin{proof}  We have only to consider  the case $x>x_0$, the other one being treated in the same way. Let  $x>0$ and $f(\tht)=0$ for $\tht >b >0$. 
To the integral on the RHS of (\ref{Decom})  we apply integration by parts to have
\beqn\label{1/om}
\int_{M/x}^b \frac{f(\tht)e^{ix\tht}}{\om_x(\tht)}   d\tht = O\bigg(\frac1{x\om_x(M/x)}\bigg) + \frac{1}{ix}
\int_{M/x}^b \frac{f(\tht)\om_x'(\tht)e^{ix\tht}}{\om^2_x(\tht)} d\tht. 
\eeqn
The contribution  to the second term of the last integral restricted on $\de<\theta\leq b$ is negligible, while by Lemma \ref{lem7} the contribution  of the other part of the integral is  dominated in absolute value by a constant multiple of
\[
\frac{A(x)}{x} \int_{M/x}^\de \frac { 1+ o\big(\sqrt{\tht x} \,\big)\,}{\tht^2 A^2(1/\tht)}  d\tht
= 
\frac{A(x)}{x} \int_{1/\de}^{x/M} \frac{1+ o\big(\sqrt{x/t}\,\big)}{A^2(t)} dt\sim  \frac{1}{M A(x)}.
\]
This together with  $x|\om_x(M/x)|  \sim MA(x)$ concludes the proof. 
\end{proof}
\v2
Applying  the bound of Lemma \ref{lem8} for an even function $f$ and considering it  for $- x$ in place of $x$ as well we infer that
\beqn\label{C-S}
\bigg|\int_{M/x}^1 f(\tht)C(\tht)\cos x\tht \,  d\tht\bigg| \leq \frac{C}{MA(x)} \quad\mbox{and}\quad 
\bigg|\int_{M/x}^1 f(\tht)S(\tht)\sin x\tht  \, d\tht\bigg| \leq \frac{C}{M A(x)}.
\eeqn
\v2
\v2\v2

{\bf 4.2.} {\it Proof of Theorem.}
The proof will proceed along  the same lines as the proof described in Section 3.2  of Corollary \ref{cor2}   with the help of Lemmas \ref{lem6} and \ref{lem8}.  The proof will also  rest on the general statement
 (\ref{ET})  valid independently  of the  present situation. [Except for   (\ref{ET}), the proof given below  is self-contained.] 
First note that Lemma \ref{lem6} ensures both Lemmas \ref{lem2} and \ref{lem3} being  applicable which together give
 (\ref{eq2}), or what is the same thing
\beqn\label{E}
\int_\R e^{i\tht\xi}\hat g(\xi)U\{x+ d\xi\}  = \int_\R g(\tht-u)e^{-ixu}\big[C(u)+ iS(u)\big]du.
\eeqn

Now we can easily  complete the  proof  of  Theorem. By Lemma \ref{lem5}   it follows that as $x\to\infty$
 $$\int_{-M/x}^{M/x} C(u)\cos xu\,du
\sim \pi \int_{x}^\infty \frac{H(t)}{A^2(t)}dt; 
$$
also by  (\ref{29})  $S(\tht) \sim [\tht A(1/\tht)]^{-1}$, and   using  this we  see as in the proof of Lemma  \ref{lem5}  that
$$\int_{-M/x}^{M/x}  S(u)\sin xu\,du = \frac{1}{ A(x)}\{\pi+ o(1)+ O(1/M)\}.
 $$
 Making the same argument as given at (\ref{g/SS}) and  immediately after it and employing (\ref{C-S})   in place of (\ref{E5})  (to dispose of the integral over $|u|>M/x$)  as well as the  estimates obtained right above we   compute the RHS of (\ref{E}) to see that if $g(\tht)\neq 0$,
 $$\int_\mathbb{R} g(\tht-u)e^{-ixu} C(u)du \sim g(\tht)\int_{-1}^1 \cos xu\, C(u)du \sim \pi g(\tht) \int_{|x|}^\infty \frac{H(t)}{A^2(t)}dt \quad (x \to \pm \infty)$$
  and
 $$ i\int_\mathbb{R} g(\tht-u)e^{-ixu} S(u)du \sim g(\tht) \int_{-1}^1 \sin xu\, S(u)du \sim \pm \frac{\pi g(\tht)}{A(|x|)} \quad (x\to \pm \infty).$$
 Hence,   on recalling $1/A(|x|) =\int_{|x|}^\infty K(t)dt/A^2(t)$, as $x\to \pm \infty$
 $$\int_\R e^{-i\tht\xi}\hat g(\xi)U\{x+ d\xi\}  = \pi g(\tht) \bigg(  \int_{|x|}^\infty \frac{H(t)}{A^2(t)}dt\{1+o(1)\} \pm \int_{|x|}^\infty \frac{K(t)}{A^2(t)}dt\{1+o(1)\}\bigg),$$
 which  shows the formula of Theorem in view of  (\ref{ET}). \qed

\vskip4mm

{\bf 4.3.}  {\it Example.} Suppose that  $X$ belongs to the domain of attraction of a stable law of exponent 1, or equivalently, that as $x\to\infty$, 
\beqn\label{Ex}
1-F(x)\sim pL(x)/x \quad \mbox{and}\quad  F(-x) \sim (1-p)L(x)/x 
\eeqn
 for some constant   $0\leq p\leq 1$  and s.v. function  $L$.
 Then (Ha) holds if and only if  
\beqn\label{Sp.c} \rho_n :=  P[S_n >0] \to  1
\eeqn
(cf.  \cite{KM/lmp}). 
Here $S_n$ is a random walk with  $S_0=0$ and the step  distribution given   by $F$.   (\ref{Sp.c})---hence (Ha)---holds whenever $p>1/2$ and  for a (small but significant) sub-class of $F$ with $p=1/2$. 
Suppose $E|X|= \infty$. 
Then 
for $p>1/2$, condition  (Hac)  is satisfied with $\kappa =(2p-1)$ (see Remark \ref{rem4}) so that  
Corollary \ref{cor1}  yields that
\[ 
\frac{U(x, x+h]}{h}\; \sim \frac1{(2p-1)^{2}\ell(x)} \times \left\{ \begin{array}{ll} p \quad &\mbox{as}\;\; x\to\infty, \\
1-p  \quad  & \mbox{as}\;\; x\to -\infty. 
\end{array}\right.
\]
This formula is gotten in 
  \cite[Theorem 3.6]{Ber} (where $F$ is assumed to be arithmetic of span one) under some auxiliary restriction imposed on $P[X=y]$.  
 
  Here we derive a criterion for  transience: if (\ref{Ex}) holds then $F$ is transient if and only if
\beqn\label{Trans}
\int_{1}^\infty \frac{H(t)}{\big(L(t)\vee |A(t)|\big)^2}dt <\infty.
\eeqn
[If $E|X|<\infty$ in addition, (\ref{Trans}) is equivalent to  $EX\neq 0$ as is directly checked.]
In a manner similar to that  leading  to (\ref{<})  one deduces that for $\theta>0$ small
\beqn\label{T/R}
 \theta \big|A(1/\theta)\big|{\bf1}\Big(\pi L(1/\theta) < |A(1/\theta)|\Big)  
\leq \frac{\big|E[\sin \theta X]\big|}{1+o(1)} \leq \theta \big(|A(1/\theta)| + L(1/\theta)  \big), 
\eeqn
while it is known (cf. e.g.,  \cite{P}) that  
$$1- E[\cos \theta X] \sim {\textstyle \frac12} \pi \theta L(1/\theta),$$
 so that $|1-\phi(\theta)| \asymp \theta \big(L(1/\theta) + |A(1/\theta)|\big)$ ($\theta \downarrow 0$). Hence
$$\Re\frac{1}{1-\phi(\theta)} \asymp \frac{ L(1/|\theta|)/|\theta|}{L^2(1/|\theta|)+ A^2(1/|\theta|)}. $$
Thus we have the asserted criterion in view of Ornstein's theorem.  [In the particular case when  $\rho_n \to 0$ or $1$  the criterion (\ref{Trans}) is obtained from Lemma \ref{lem6}.] By (\ref{T/R}) together with the determination of the sequences  of norming  $S_n$ for the convergence to a stable law as given by  \cite[Theorem XVII.5.3]{F} it  is easy to see that $\rho_n\to1$ (resp. 0) if and only if $A(x)/L(x)\to\infty$ (resp $-\infty$) (cf. also \cite{KM/lmp}),  and  
$\rho_n$ remains in a closed interval of $(0, 1)$ if and only if  $|A(x)|=O(L(x))$;  in the latter case   (\ref{Trans}) is reduced to $\int_1^\infty [tL(t)]^{-1}dt <\infty$, and the result is 
obtained in \cite{Wei} by using the Gnedenko local limit theorem.

 For  an arithmetic walk satisfying   (\ref{Ex})
 Berger \cite[Theorem3.5]{Ber} shows that if  $\rho_n$ tends to  $\rho \in (0,1)$, then $U\{n\}/\sum_{k=n}^\infty 1/[kL(k)] \to c(\rho)$ where $c(\rho)$ is a positive function
(expressed explicitly), provided $F$ is transient. The result can be extended to non-arithmetic walks. 
This combined with the consequence of Corollary \ref{cor1} mentioned above shows that if $S_n$ is transient---equivalently (\ref{Trans}) holds---and $\lim \rho_n = \rho$, then as $x\to\infty$
  \[
 \frac{U[x,x+h)}{h} \sim \left\{\begin{array} {ll}
|2p-1|^{-2}p \big/ \ell(x) \quad &\mbox{if} \quad p\neq 1/2, \\[2mm]
 {\displaystyle 1/[2m(x)]}\quad &\mbox{if} \quad p=1/2, \, \rho\in \{0,1\},\\[2mm]
 {\displaystyle c(\rho) \int_x^\infty \frac{dt}{t L(t)} } \quad &\mbox{if} \quad  p=1/2, \,  0< \rho <1.
\end{array}\right.
\]
Here $h$ is understood to be a positive multiple of the span of $F$ if $F$ is arithmetic.

Finally  we present   two particular  cases of (\ref{Ex})  that verify incomparability of (Ha) and (Hb). Let $p=1/2$ in (\ref{Ex}). If
$L(x) \sim (\log x)^2$, we may have $K(x)\sim (\log x)^{\de}/x$ with $0<\de<2$  so  that $A(x)\sim (\log x)^{\de +1}/(\de +1)$, showing that if $1/2< \de \leq 1$,  (Hb) holds  whereas (Ha) fails.    
Similarly  if $L(x) \sim 1$ and  $K(x)\sim (\log x)^{-\de}/x$ with $1/2<\de<1$, then  (Ha) holds  but (Hb) does not.

\end{document}